\numberwithin{equation}{section}
\newtheorem{proposition}{Proposition}[section]
\newtheorem{theorem}[proposition]{Theorem}
\newtheorem{lemma}[proposition]{Lemma}
\newtheorem{corollary}[proposition]{Corollary}
\newtheorem{definition}[proposition]{Definition}
\theoremstyle{definition}
\newtheorem{remark}[proposition]{Remark}
\title{Repeated quasi-integration on locally compact spaces}
\author{S. V. Butler}
\address{Department of Mathematics,  University of California, Santa Barbara,  552 University Rd, Isla Vista, CA 93117, USA}
\email{svtbutler@ucsb.edu}
\date{February 18, 2019}
\keywords{quasi-linear functional, repeated quasi-integration, simple quasi-integral, almost simple quasi-integral, topological measure}
\subjclass{28C05, 28A25}
\begin{document} 

\begin{abstract}
When $X$ is locally compact, a quasi-integral (also called a quasi-linear functional) on $ C_c(X)$ is a homogeneous, positive functional that is only assumed 
to be linear on singly-generated subalgebras. We study simple and almost simple quasi-integrals, i.e., quasi-integrals whose 
corresponding compact-finite topological measures assume exactly two values. We present a criterion 
for repeated quasi-integration (i.e.,  iterated integration with respect to topological measures)
to yield a quasi-linear functional.  We find a criterion for a double quasi-integral to be simple. 
We describe how a product of topological measures acts on open and compact sets. 
We show that different orders of integration in repeated quasi-integrals give the same quasi-integral if and only if the
corresponding topological measures are both measures or one of the corresponding topological measures 
is a positive scalar multiple of a point mass. 
\end{abstract}
\maketitle
\medskip\noindent

\section{Introduction}

Quasi-integrals (also called quasi-linear functionals) generalize linear functionals. 
They were first introduced on a compact space by
J. Aarnes in \cite{Aarnes:TheFirstPaper}, where Aarnes also studied their properties, and 
proved the representation theorem of quasi-integrals in terms of topological measures (formerly called quasi-measures). 
For uses of quasi-linear functionals in symplectic topology one may consult 
numerous papers beginning with \cite{EntovPolterovich} (which has been cited over 100 times), and a monograph \cite{PoltRosenBook}. 

In \cite{Grubb:Products} D. Grubb gives a nice treatment of repeated quasi-integration on a product of compact spaces. 
In this paper we generalize all results from \cite{Grubb:Products} to a locally compact setting, and also obtain new results 
previously not known in the compact case. 
In proving results  about quasi-linear functionals and topological measures in a locally compact setting one loses 
certain convenient features present in the compact case: quasi-integrals and topological measures are not finite in general,     
subalgebras do not contain constants, quasi-integrals are no longer states, etc. On another level, to tackle the problems related to 
repeated integration on locally compact spaces, one needs a variety of results about quasi-liner functionals and topological measures. 
These include a Representation Theorem showing that quasi-linear functionals are obtained by integration with 
respect to compact-finite topological measures, continuity of quasi-integrals with respect to the topology of uniform convergence 
on compacta, and others. 
These results are obtained in \cite{Alf:ReprTh}, \cite{Butler:TMLCconstr},  \cite{Butler:QLFLC},
and  \cite{Butler:DTMLC}.  
         
In this paper $X$ is a Hausdorff, locally compact, connected space. In Section \ref{prelim} we give necessary definitions and facts. 
We define quasi-integrals and topological measures on a locally compact space and outline the correspondence between them.  
In Section \ref{Almsim} we study simple and almost simple quasi-integrals and topological measures, 
which are necessary for investigating repeated quasi-integration. In Section \ref{Products}
we define and study repeated quasi-integrals. In particular, we give a criterion for repeated quasi-integration 
to yield a quasi-linear functional and a criterion for a double quasi-integral to be simple. 
In Section \ref{Fubini} we give formulas that describe how a product of compact-finite topological measures acts on open and compact sets. 
We show that different orders of integration in repeated quasi-integrals give the same quasi-linear functional 
if and only if corresponding compact-finite topological measures are both measures or one of the corresponding topological measures 
is a positive scalar multiple of a point mass. 

\section{Preliminaries} \label{prelim}

We use continuous functions with the uniform norm; $C_c(X)$ is the set of real-valued continuous functions on $X$ with compact support, and
$C_c^+(X)$ is the set of nonnegative functions from $C_c(X)$.  
By $ supp \  f $ we mean $ \overline{ \{x: f(x) \neq 0 \} }$.  We denote by $1$ the constant function $1(x) =1$,
by $id$ the identity function $id(x) = x$, and by $1_E$ the indicator function of the set $E$. 
When we consider maps into extended real numbers, they are not identically $\infty$. 
We denote by $\overline E$ the closure of a set $E$, and by $ \bigsqcup$ a union of disjoint sets.
$\mathscr{O}(X)$,  $\mathscr{C}(X)$, and  $\mathscr{K}(X)$  stand, respectively, for the collections of open, closed, and compact subsets of  $X $.

We are interested in repeated quasi-integrals, i.e. repeated quasi-integration 
with respect to topological measures on locally compact spaces.
Since $X \times Y$ is locally compact iff $X$ and $Y$ are locally compact,  
we assume that $X$ and $Y$ are locally compact.

We will use the following definitions and facts. 

\begin{definition}\label{TMLC}
A topological measure on a locally compact space $X$ is a set function
$\mu:  \mathscr{C}(X) \cup \mathscr{O}(X) \to [0,\infty]$ satisfying the following conditions:
\begin{enumerate}[label=(TM\arabic*),ref=(TM\arabic*)]
\item \label{TM1} 
If $A,B, A \sqcup B \in \mathscr{K}(X) \cup \mathscr{O}(X) $ then $ \mu(A\sqcup B)=\mu(A)+\mu(B).$
\item \label{TM2} 
If $U\in\mathscr{O}(X)$ then
$
\mu(U)=\sup\{\mu(K):K \in \mathscr{K}(X), \  K \subseteq U\}.
$
\item \label{TM3}
If $F \in \mathscr{C}(X)$ then
$
\mu(F)=\inf\{\mu(U):U \in \mathscr{O}(X), \ F \subseteq U\}.
$
\end{enumerate}
A topological measure $\mu$ on $X$ is called compact-finite if $\mu(K) < \infty$ for $K \in \mathscr{K}(X)$;  $ \mu$ is finite if $\mu(X) < \infty$.
\end{definition}
 
\begin{theorem} \label{subaddit}
Let $\mu$ be a topological measure on a locally compact space $X$. 
The following are equivalent: 
\begin{itemize}
\item[(a)]
If $C, K$ are compact subsets of $X$, then $\mu(C \cup K ) \le \mu(C) + \mu(K)$.
\item[(b)]
If $U, V$ are open subsets of $X$,  then $\mu(U \cup V) \le \mu(U) + \mu(V)$.
\item[(c)]
$\mu$ admits a unique extension to an inner regular on open sets, outer regular Borel measure 
$m$ on the Borel $\sigma$-algebra of subsets of $X$. 
$m$ is a Radon measure iff $\mu$ is compact-finite. 
If $\mu$ is finite then $m$ is a regular Borel measure.
\end{itemize}
(See Theorem 34 in Section 4 of \cite{Butler:DTMLC}.)
\end{theorem}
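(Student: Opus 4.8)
The plan is to establish the three conditions as equivalent by first handling the elementary implications $(c)\Rightarrow(a)$ and $(a)\Leftrightarrow(b)$, and then devoting the bulk of the work to the extension statement $(a)\Rightarrow(c)$. Before anything else I would record the monotonicity of $\mu$ on each of the classes $\mathscr{O}(X)$ and $\mathscr{C}(X)$, together with the comparison $\mu(K)\le\mu(U)$ whenever $K\subseteq U$ with $K$ compact and $U$ open; all of these are immediate consequences of \ref{TM2} and \ref{TM3}. The implication $(c)\Rightarrow(a)$ is then trivial, since a Borel measure is finitely subadditive: $\mu(C\cup K)=m(C\cup K)\le m(C)+m(K)=\mu(C)+\mu(K)$.

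For $(a)\Leftrightarrow(b)$ I would use the regularity axioms to pass between the compact and open formulations. Assuming $(b)$, given compact $C,K$ and $\varepsilon>0$, outer regularity \ref{TM3} supplies open sets $U\supseteq C$, $V\supseteq K$ with $\mu(U)\le\mu(C)+\varepsilon$ and $\mu(V)\le\mu(K)+\varepsilon$; then $\mu(C\cup K)\le\mu(U\cup V)\le\mu(U)+\mu(V)\le\mu(C)+\mu(K)+2\varepsilon$, and $\varepsilon\to0$ gives $(a)$. For the converse I would assume $(a)$ and use inner regularity \ref{TM2}, so that it suffices to bound $\mu(L)$ for an arbitrary compact $L\subseteq U\cup V$. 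Here the one genuinely topological ingredient enters: in a locally compact Hausdorff space a compact set $L$ covered by two open sets $U,V$ decomposes as $L=L_1\cup L_2$ with $L_1\subseteq U$, $L_2\subseteq V$ compact (choose an open $W$ with $L\setminus V\subseteq W\subseteq\overline{W}\subseteq U$ and $\overline{W}$ compact, and set $L_1=L\cap\overline{W}$, $L_2=L\setminus W$). Then $\mu(L)\le\mu(L_1)+\mu(L_2)\le\mu(U)+\mu(V)$, and taking the supremum over such $L$ yields $(b)$.

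The heart of the argument is $(a)\Rightarrow(c)$, which is a Carathéodory-type extension carried out with care for infinite values. First I would upgrade subadditivity to \emph{countable} subadditivity on open sets: if $U=\bigcup_n U_n$, then every compact $K\subseteq U$ lies in a finite subunion by compactness, so $\mu(K)\le\sum_n\mu(U_n)$, and \ref{TM2} gives $\mu(U)\le\sum_n\mu(U_n)$. Next I would define the outer measure $m^*(E)=\inf\{\mu(U):E\subseteq U\in\mathscr{O}(X)\}$ for arbitrary $E\subseteq X$; monotonicity is clear, and countable subadditivity of $m^*$ follows from that of $\mu$ on opens. The technical core is to verify that every open set is Carathéodory-measurable, i.e. $m^*(A)\ge m^*(A\cap U)+m^*(A\setminus U)$ for all $A$ and all open $U$; this I would prove by approximating $A$ from outside by an open set and using the disjoint additivity \ref{TM1} together with inner regularity to split the approximating open set along the boundary of $U$.

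Restricting $m^*$ to the resulting $\sigma$-algebra, which contains the Borel sets, produces the Borel measure $m$. I would then check that $m$ agrees with $\mu$ on open sets (immediate, since $m^*(U)=\mu(U)$ by monotonicity) and on closed sets (via \ref{TM3}), that $m$ is inner regular on open sets by \ref{TM2} and outer regular by construction, and that $m$ is compact-finite exactly when $\mu$ is, giving the Radon and regular-Borel refinements. Uniqueness follows because any two such extensions agree on open and closed sets and hence, by inner and outer regularity, on all Borel sets. The main obstacle throughout is the presence of the value $\infty$: unlike the compact case one cannot freely subtract, so every additivity and approximation step — especially the Carathéodory measurability of open sets — must be arranged to compare sums rather than differences, with the regularity axioms \ref{TM2} and \ref{TM3} carrying the weight that finiteness would otherwise provide.
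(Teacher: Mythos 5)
The paper does not actually prove this theorem: it is imported verbatim by citation to Theorem 34 of \cite{Butler:DTMLC}, so there is no in-paper argument to compare yours against. Judged on its own terms, your outline is the standard and correct route. The implications $(c)\Rightarrow(a)$ and $(a)\Leftrightarrow(b)$ are handled properly; in particular the decomposition of a compact $L\subseteq U\cup V$ into $L_1=L\cap\overline{W}\subseteq U$ and $L_2=L\setminus W\subseteq V$ is exactly the right use of local compactness, and the passage back via \ref{TM2} is sound. (One small point: the monotonicity facts you invoke at the outset also need \ref{TM1}, via $\mu(U)=\mu(K)+\mu(U\setminus K)$ for $K\subseteq U$, not just \ref{TM2} and \ref{TM3}.) For $(a)\Rightarrow(c)$, the Carath\'eodory construction with $m^*(E)=\inf\{\mu(U):E\subseteq U\ \text{open}\}$ is the classical Riesz-type argument; the one step you only gesture at, measurability of open sets, does go through as you describe: given open $G\supseteq A$ with $\mu(G)$ close to $m^*(A)$, pick by \ref{TM2} a compact $K\subseteq G\cap U$ with $\mu(K)$ close to $\mu(G\cap U)$, note $G=K\sqcup(G\setminus K)$ with $G\setminus K$ open containing $A\setminus U$, and apply \ref{TM1}. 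The remaining verifications (agreement with $\mu$ on open and closed sets, the regularity properties, the Radon and finite refinements, and uniqueness from outer regularity) are routine as you indicate. So the proposal is a correct, if partly sketched, proof of a statement the paper itself leaves to an external reference.
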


\begin{remark} \label{smsubalg}
Let $X$ be locally compact.
For $ f \in C_c(X)$ we have $0 \in f(X) = [a,b]$. By a singly generated subalgebra of $ C_c(X)$ generated by $f$  we mean 
the smallest closed subalgebra of $C_c(X)$ containing $f$; it has the form 
\[ B(f) =  \{ \phi \circ f :  \phi(0) = 0, \ \phi \in C(f(X)) \}. \] 
When $X$ is compact, by a singly generated subalgebra of $C(X)$ generated by $f$  we mean 
the smallest closed subalgebra of $C(X)$ containing $f$ and $1$; it  has the form:
\[ A(f)  = \{ \phi \circ f :  \phi \in C({f(X)}) \}. \]
(See Remarks 4 and 2 in  \cite{Butler:QLFLC}).
\end{remark}

\begin{definition} \label{QI}
Let $X$ be locally compact. 
A quasi-integral (or a quasi-linear functional)  on $C_c(X)$ is a map $\rho : C_c(X) \longrightarrow \mathbb{R} $ such that:
\begin{enumerate}[label=(QI\arabic*),ref=(QI\arabic*)]
\item \label{QIconsLC}
$\rho$ is homogeneous, i.e. $\rho(a f) = a \rho(f)$ for $ a \in \mathbb{R}.$
\item \label{QIlinLC}
For each  $f \in C_c(X)  $ we have:
$\rho(g+h) =  \rho (g) + \rho (h)$ for $g,h$ in the singly generated subalgebra $B(f)$. 
\item  \label{QIpositLC}
$\rho$ is positive, i.e. $ f \ge 0 \Longrightarrow \rho(f)  \ge 0.$
\end{enumerate}
When $X$ is compact, we call $\rho$ a quasi-state if $\rho(1) =1$. 
\end{definition}

In this paper we are interested in quasi-integrals on $C_c(X)$ and  compact-finite topological measures for the reason given in the next remark. 

\begin{remark} \label{RemART}
There is an order-preserving isomorphism between compact-finite 
topological measures on $X$ and quasi-integrals on $C_c(X)$, and $\mu$ is a measure iff the 
corresponding functional is linear. 
See Theorem 42 in Section 4 of \cite{Butler:QLFLC} for this result and Theorem 3.9 in \cite{Alf:ReprTh} for the first version of 
the representation theorem.
We outline the correspondence.
\begin{enumerate}[label=(\Roman*),ref=(\Roman*)]
\item \label{prt1}
Suppose $\mu$ is a compact-finite topological measure on a locally compact space $X$, $f \in C_c(X)$. 
Then there exists a measure $m_f$ on $\mathbb{R}$  with $supp \  m_f \subseteq f(X)$ such that 
\[ m_f (W)  = \mu(f^{-1}(W) \setminus \{0\} ) \text{   for every open set   } W \in \mathbb{R} , \]
thus, 
\[ m_f (W)  = \mu(f^{-1}(W)) \text{   for every open set   } W \in \mathbb{R} \setminus \{0\} . \]
If $ \mu$ is finite then 
\begin{eqnarray} \label{mfOpen}
 m_f (W)  = \mu(f^{-1}(W )) \text{   for every open set   } W \in \mathbb{R} . 
\end{eqnarray}
The measure $m_f$ is the Stieltjes measure associated with the function $F(t) = \mu(f^{-1}((t, \infty) \setminus \{0\})$ 
(respectively,  $F(t) = \mu(f^{-1}((t, \infty))$ if $ \mu$ is finite.)
(See Lemma 25  in Section 3 of \cite{Butler:QLFLC}.) \\
Define a quasi-integlral $\rho= \rho_{\mu}$ on  $C_c(X)$  by:
\begin{align} \label{rmformula}
\rho_{\mu} (f) = \int_{\mathbb{R}} id \  \, dm_f. 
\end{align} 
We also write $\rho_{\mu} (f)  =  \int_X f \ \, d\mu $.
If $\mu$ is a measure then  $ \rho_\mu (f) = \int_X f \ \, d\mu $ in the usual sense.
On singly generated subagerbras $\rho_\mu$ acts as follows: for every  $\phi \in C([a,b])$ (with $\phi(0) = 0$
if $X$ is locally compact but not compact)
\begin{align} \label{flamf}
 \rho_{\mu} (\phi \circ f) = \int_\mathbb{R} \phi \ d m_f = \int_{[a,b]} \phi \ d m_f, 
\end{align}
where $[a,b]$ is any interval containing $f(X)$.
(See Theorem 30 and Proposition 29 in Section 3 of \cite{Butler:QLFLC}.)
\item \label{mrSets3} 
Let  $\rho$ be a  quasi-integral on $C_c(X)$. The corresponding
compact-finite topological measure $ \mu = \mu_{\rho}$ is given as follows: \\
If $U$ is open, $ \mu_{\rho}(U) = \sup\{ \rho(f): \  f \in C_c(X), 0\le f \le 1, supp \  f \subseteq U  \},$\\
if $F$ is closed, $ \mu_{\rho}(F) = \inf \{ \mu_{\rho}(U): \  F \subseteq U,  U \in \mathscr{O}(X) \}$. \\
If $K$ is compact, $ \mu_{\rho}(K) = \inf \{ \rho(g): \   g \in C_c(X), g \ge 1_K \}  
= \inf \{ \rho(g): \   g \in C_c(X), 0 \le g \le 1, g=1 \mbox{  on   } K \}. $
(See Definition 33 and Lemma 35 in Section 4 of  \cite{Butler:QLFLC}.)
\end{enumerate}
\end{remark}

For examples of topological measures and quasi-integrals on locally compact spaces
see \cite{Butler:TechniqLC} and the last sections of \cite{Butler:TMLCconstr} and \cite{Butler:QLFLC}.

\begin{remark} \label{rhoAd}
If $f \cdot g =0, f,g \ge 0$ then $f,g$ are in the same singly generated subalgebra. 
If $\rho$ is a quasi-integral on a locally compact space, then  $ \rho(f) = \rho(f^+) - \rho(f^-) $ for any $f \in C_c(X)$.
If $f \cdot g =0$ then $ \rho(f+g) = \rho(f) + \rho(g).$  Also, $ \rho(0) =0$.
(See Lemma 19 and Lemma 20 in Section 3 of \cite{Butler:QLFLC}.) 
\end{remark}

The following is a part of Theorem 47 in Section 5 of \cite{Butler:QLFLC}.
\begin{theorem} \label{contCc}
Suppose $X$ is locally compact and $\rho$ is af quasi-linear functional  on $C_c(X)$.
If $f, g \in C_c(X), \, f,g \ge 0, supp \  f, supp \  g \subseteq K$, where $K$ is compact, then 
\[ | \rho(f ) -\rho(g) | \le \parallel f-g \parallel \, \mu(K), \]
where $ \mu$ is the compact-finite topological measure corresponding to $ \rho$.
In particular,  for any $ f \in C_c(X)$
\[ | \rho(f) | \le \parallel f \parallel \, \mu(supp \  f).\]
If $f, g \in C_c(X), \, supp \  f, supp \  g \subseteq K$, where $K$ is compact, then 
\[ | \rho(f ) -\rho(g) | \le 2  \parallel f-g \parallel \,  \mu(K) .\]
\end{theorem}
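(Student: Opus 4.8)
The plan is to convert the non-linear quantity $\rho(f)-\rho(g)$ into a comparison of super-level sets and then invoke monotonicity of $\mu$. Two facts will be used throughout. First, $\mu$ is monotone: if $U\subseteq V$ are open then $\mu(U)\le\mu(V)$ directly from \ref{TM2}, and an open set $U$ contained in a compact $K$ satisfies $\mu(U)\le\mu(K)$ by combining this with \ref{TM3}. Second, the representation in Remark \ref{RemART} yields, for $f\in C_c^+(X)$, the layer-cake formula $\rho(f)=\int_{0}^{\infty}\mu(\{f>t\})\,dt$: indeed $\rho(f)=\int id\,dm_f$ with $m_f$ the Stieltjes measure of $F(t)=\mu(f^{-1}((t,\infty)\setminus\{0\}))$, and for $t\ge 0$ one has $F(t)=\mu(\{f>t\})$, so a standard Tonelli argument gives the formula. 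Each $\{f>t\}$ with $t\ge0$ is open and contained in $\mathrm{supp}\,f$, hence of finite measure by compact-finiteness.

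For the first inequality I would set $c=\|f-g\|$ and use $f\le g+c$ to obtain the inclusion $\{f>t+c\}\subseteq\{g>t\}$ of open sets for every $t$; monotonicity then gives $\mu(\{f>t+c\})\le\mu(\{g>t\})$. Integrating in $t$ over $[0,\infty)$ and substituting $s=t+c$ produces $\int_c^\infty\mu(\{f>s\})\,ds\le\rho(g)$, so by the layer-cake formula $\rho(f)-\rho(g)\le\int_0^c\mu(\{f>s\})\,ds\le c\,\mu(K)$, the last step because $\{f>s\}\subseteq\mathrm{supp}\,f\subseteq K$. Swapping $f$ and $g$ gives the reverse bound and hence $|\rho(f)-\rho(g)|\le\|f-g\|\,\mu(K)$.

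The hard part is exactly this shift argument: since $\rho$ is only quasi-linear we cannot write $\rho(f)-\rho(g)=\rho(f-g)$, and the substitution $s=t+c$ is what lets us compare the two distribution functions while keeping all thresholds nonnegative. This last point is essential, because for $g\ge0$ the value $\mu(\{g>s\})$ equals the possibly infinite $\mu(X)$ as soon as $s<0$, so a naive downward shift of the variable would be useless.

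The remaining two statements should then follow formally. Putting $g=0$ gives $|\rho(f)|\le\|f\|\,\mu(\mathrm{supp}\,f)$ for $f\ge0$; for arbitrary $f$ I would write $\rho(f)=\rho(f^+)-\rho(f^-)$ (Remark \ref{rhoAd}) and combine $0\le\rho(f^{\pm})\le\|f\|\,\mu(\mathrm{supp}\,f)$ with the elementary bound $|a-b|\le\max(a,b)$ for $a,b\ge0$ to get $|\rho(f)|\le\|f\|\,\mu(\mathrm{supp}\,f)$ without an extra factor. Finally, for general $f,g$ supported in $K$, the decomposition $\rho(f)-\rho(g)=(\rho(f^+)-\rho(g^+))-(\rho(f^-)-\rho(g^-))$ together with the first inequality applied to each nonnegative pair yields $|\rho(f)-\rho(g)|\le(\|f^+-g^+\|+\|f^--g^-\|)\,\mu(K)\le2\|f-g\|\,\mu(K)$, since the maps $t\mapsto t^{\pm}$ are $1$-Lipschitz.
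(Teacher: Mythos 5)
Your argument is correct and complete. Note that the paper itself offers no proof of this statement --- it is imported verbatim as part of Theorem 47 of \cite{Butler:QLFLC} --- so there is nothing in the present text to compare against line by line; but your route is the standard one for such Lipschitz estimates and is almost certainly what the cited source does. The essential points are all in place: the layer-cake identity $\rho(f)=\int_0^\infty \mu(\{f>t\})\,dt$ for $f\in C_c^+(X)$ follows from formula (\ref{rmformula}) together with $m_f((s,\infty))=\mu(\{f>s\})$ for $s\ge 0$ (valid since $(s,\infty)\subseteq\mathbb{R}\setminus\{0\}$ there), the shift $\{f>t+c\}\subseteq\{g>t\}$ correctly substitutes for the unavailable identity $\rho(f)-\rho(g)=\rho(f-g)$, and monotonicity of $\mu$ on $\mathscr{O}(X)\cup\mathscr{K}(X)$ is a routine consequence of \ref{TM2} and \ref{TM3}. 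Your observation that $|\rho(f^+)-\rho(f^-)|\le\max(\rho(f^+),\rho(f^-))$ gives the middle inequality without a factor of $2$, while the factor $2$ in the last inequality comes from estimating the positive and negative parts separately, matches the statement exactly. The only cosmetic caution: you invoke Tonelli for the Stieltjes measure $m_f$, which requires $m_f$ to be a genuine (countably additive, finite) Borel measure on $\mathbb{R}$ --- this is guaranteed by part \ref{prt1} of Remark \ref{RemART}, so you should cite it rather than the (non-additive) $\mu$ itself at that step.
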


\section{Almost simple quasi-integrals} \label{Almsim}

A nontrivial topological measure assumes at least two values. Topological measures that assume exactly two values are important 
for proving results about repeated quasi-integration.
  
\begin{definition} \label{mulsimqi}
Let $X$ be locally compact. A topological measure is called simple if it only assumes values $0$ and $1$. 
A topological measure is almost simple if it is a positive scalar multiple of a simple topological measure. 
A quasi-linear functional is simple (almost simple) if the corresponding topological measure is simple (almost simple).
\end{definition}

\begin{lemma} \label{rhof1f21}
Suppose $X$ is locally compact and $\mu$ is a compact-finite topological measure that assumes more than two values. 
Let $ \rho$ be  the corresponding quasi-integral.
Then there are functions
$f_1, f_2 \in C_c^+(X) $ such that $f_1 f_2 = 0, \  \rho(f_1) = \rho(f_2) = 1$. Functions $f_1, f_2$ belong to the same subalgebra.
\end{lemma}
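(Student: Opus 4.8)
The plan is to reduce the statement to producing a single function $f \in C_c^+(X)$ whose associated measure $m_f$ on $\mathbb{R}$ (from Remark \ref{RemART}) is not a single point mass on $(0,\infty)$, i.e.\ the non-increasing function $t \mapsto m_f((t,\infty)) = \mu(\{f>t\})$ takes at least two distinct positive values. Indeed, granting such an $f$, one can pick $t_1 < t_2$ in $(0,\infty)$ with $m_f((t_1,\infty)) > m_f((t_2,\infty)) > 0$, so that two disjoint compact subintervals $I_1, I_2 \subseteq (0,\infty)$ each carry positive $m_f$-mass. Choosing continuous bumps $\phi_i \ge 0$ supported in $I_i$ and rescaling by homogeneity \ref{QIconsLC} so that $\int_{\mathbb{R}} \phi_i \, dm_f = 1$, I set $f_i = \phi_i \circ f$. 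Since $\phi_i(0) = 0$ and $\phi_1 \phi_2 = 0$, these lie in $B(f)$, satisfy $f_1 f_2 = 0$, and $\rho(f_i) = \int_{\mathbb{R}} \phi_i \, dm_f = 1$ by \eqref{flamf}. This simultaneously gives the final assertion, since $f_1, f_2$ sit in the same subalgebra $B(f)$.

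It remains to construct such an $f$, which is the crux. First I would record that $\mu$ already assumes more than two values on compact sets: were $\mu$ to take only the values $0$ and $c$ on $\mathscr{K}(X)$, then by \ref{TM2} every open set would have measure in $\{0,c\}$, and then by \ref{TM3} every closed set as well, contradicting the hypothesis. Hence there are compacta of two distinct positive (necessarily finite, by compact-finiteness) values $0 < a < b$. I would then aim to produce nested compacta $D \subseteq \mathrm{int}\, C$ with $0 < \mu(D) < \mu(C)$, together with, via local compactness and \ref{TM3}, an open $W$ satisfying $D \subseteq W \subseteq \overline{W} \subseteq \mathrm{int}\, C$ and $\mu(\overline{W}) < \mu(C)$. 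A Urysohn ``two-plateau'' function $f = u + w$ with $u = 1$ on $C$, $w = 1$ on $D$ and $\mathrm{supp}\, w \subseteq \overline{W}$ then satisfies $\{f>t\} \supseteq C$ for $t \in (0,1)$, while for $t \in (1,2)$ one has $w > t-1 > 0$ on $\{f>t\}$, hence $D \subseteq \{f>t\} \subseteq \overline{W}$. Thus $\mu(\{f>t\}) \ge \mu(C)$ on $(0,1)$ but $\mu(\{f>t\}) \in [\mu(D),\mu(\overline{W})]$ on $(1,2)$, and since $\mu(\overline{W}) < \mu(C)$ these are two distinct positive values, so $m_f$ is not a single atom on $(0,\infty)$, as needed.

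The main obstacle is exactly the extraction of the nested pair $D \subseteq \mathrm{int}\, C$ with strictly different positive measures. Merely having $K, L$ with $\mu(K) = a < b = \mu(L)$ does not suffice, since they may overlap; the scenario to exclude is the concentrated one in which every positive-measure compactum inside a given compactum attains the same top value. Note that \ref{TM1} (additivity on disjoint compacta, since $A, B, A\sqcup B$ are then all compact) already forbids two disjoint compacta of equal positive measure inside such a set, which pushes toward concentration at a point; ruling this out globally is where the full strength of ``more than two values'' must enter. I would compare the local behaviour near a compactum of value $b$ with the constraints forced by the smaller value $a$, using inner and outer regularity (\ref{TM2}, \ref{TM3}) together with disjoint-compacta additivity to manufacture, inside some compact neighborhood, either the required nested superlevel regions or, equivalently, two disjoint compacta of positive measure; the latter can be separated by disjoint open sets via local compactness and then fed back through \ref{TM2} and \eqref{rmformula}. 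Once this splitting is secured, the bookkeeping in the first two paragraphs completes the argument.
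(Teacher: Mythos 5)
There is a genuine gap, and you have located it yourself: the entire argument hinges on producing either the nested pair $D \subseteq \mathrm{int}\, C$ with $0 < \mu(D) < \mu(C)$ and $\mu(\overline W) < \mu(C)$, or (equivalently, as you note) two disjoint compacta of positive $\mu$-measure, and at that point the proposal stops proving and starts describing intentions (``I would compare the local behaviour\dots to manufacture\dots''). No actual construction is given, and the route you gesture at --- a case analysis around a hypothetical ``concentrated'' scenario, using \ref{TM1} only for pairs of disjoint \emph{compact} sets --- does not obviously close. The step is in fact immediate once you apply \ref{TM1} to a complementary compact--open pair, which your sketch never does: since $\mu$ assumes more than two values, regularity (\ref{TM2}, \ref{TM3}) forces some compact $C_1$ with $0 < \mu(C_1) < \mu(X)$; then $C_1$, $X \setminus C_1$, and $X$ all lie in $\mathscr{K}(X) \cup \mathscr{O}(X)$, so \ref{TM1} gives $\mu(X\setminus C_1) = \mu(X) - \mu(C_1) > 0$, and inner regularity \ref{TM2} produces a compact $C_2 \subseteq X \setminus C_1$ with $\mu(C_2) > 0$. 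That is the paper's proof of the crux, after which disjoint open neighborhoods, Urysohn functions with $\mu(C_i) \le \rho(f_i) \le \mu(C_i)+\epsilon$ (part \ref{mrSets3} of Remark \ref{RemART}), rescaling, and Remark \ref{rhoAd} finish everything in a few lines.

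Your surrounding architecture is a legitimate alternative and would work once the crux is supplied: reducing to a single generator $f$ whose measure $m_f$ has at least two support points in $(0,\infty)$, and then taking disjoint bumps $\phi_i \circ f \in B(f)$, has the virtue of yielding the ``same subalgebra'' clause for free with an explicit generator (the paper instead gets it indirectly from Remark \ref{rhoAd}). Two small cautions if you pursue it: you must choose the $\phi_i$ so that $\int \phi_i\, dm_f > 0$ (positive mass of a compact interval does not guarantee this for a bump supported in its interior; work with support points of $m_f$ instead), and your preliminary claim that \ref{TM1} ``forbids two disjoint compacta of equal positive measure'' is only valid inside the specific hypothetical scenario you set up, not in general. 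But none of this repairs the main omission: as written, the proof of the lemma's central existence claim is absent.
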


\begin{proof}
Choose compact $C_1$ such that $0 < \mu(C_1) < \mu(X)$. 
If $ \mu(X) < \infty$ by inner regularity of $\mu$ on $X \setminus C_1$ choose compact $C_2 \subseteq X \setminus C_1$  
such that $0 < \mu(C_2) < \mu(X)$. If $ \mu(X) = \infty$, i.e. $\mu(X \setminus C_1) = \infty$, 
choose a compact $C_2 \subseteq X \setminus C_1$ with $ \mu(C_2) > n$.
Let $U_1, U_2$ be open disjoint sets containing $C_1, C_2$. For $ \epsilon >0$ let  
$f_i \in C_c(X)$ be such that $  f_i =1 $ on $C_i, \ supp \  f_i \subseteq U_i$ and $ 0 < \mu(C_i) \le \rho(f_i) \le \mu(C_i) + \epsilon$ for $ i=1,2$.
Then $ f_1 f_2 = 0$  and by calibrating $f_i$ we may assume that $\rho(f_i) =1$ 
for $i=1,2$. (This proof is adapted from part of an argument in Theorem 1 in \cite{Grubb:Products}.)
The last statement follows from Remark \ref{rhoAd}.
\end{proof} 

The next theorem extends results for a simple quasi-state on $C(X)$ where $X$ is compact, given in Section 2 of  \cite{Aarnes:Pure}. 

\begin{theorem}  \label{rhosimple}
Let $X$ be locally compact. 
The following are equivalent for a quasi-integral $\rho$ on $ C_c(X)$:
\begin{enumerate}[label=(\roman*),ref=(\roman*)]
\item \label{sim} 
$\rho$ is simple. 
\item \label{ptm}
$m_f$ is a point mass at $y = \rho(f) \in f(X)$.
\item \label{phiout}
$ \rho(\phi \circ f) = \phi(\rho(f)) $ for any $\phi \in C([a,b])$ (with $\phi(0) = 0$ if $X$ is locally compact but not compact), where $f(X) \subseteq [a,b]$.
\item \label{mult}
$\rho$ is multiplicative on each singly generated subalgebra, 
i.e. for each  $f \in C_c(X)  $ we have:
$\rho(gh) =  \rho (g) \, \rho (h)$ for $g,h$ in the singly generated subalgebra $B(f)$. 
\end{enumerate}
\end{theorem}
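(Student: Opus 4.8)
The plan is to establish the cycle \ref{sim} $\Rightarrow$ \ref{ptm} $\Rightarrow$ \ref{phiout} $\Rightarrow$ \ref{mult} $\Rightarrow$ \ref{sim}, paralleling the compact-case treatment in \cite{Aarnes:Pure}. The crucial preliminary observation, special to the locally compact setting, is that a simple topological measure is automatically finite: since $\mu$ takes values only in $\{0,1\}$ and $X$ is both open and closed, we have $\mu(X)\in\{0,1\}$, so $\mu$ is finite and the finite-measure formula \eqref{mfOpen} is available. Discarding the trivial case $\mu(X)=0$ (where $\rho\equiv 0$ and all four statements hold at once), I assume $\mu(X)=1$.

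For \ref{sim} $\Rightarrow$ \ref{ptm}, fix $f\in C_c(X)$. By \eqref{mfOpen} we have $m_f(W)=\mu(f^{-1}(W))$ for every open $W\subseteq\mathbb{R}$, so $m_f$ is a Borel probability measure (total mass $\mu(X)=1$) whose value on every open set lies in $\{0,1\}$. A short argument then shows $m_f$ is a point mass: its support is nonempty and compact (contained in $f(X)$), and if it contained two distinct points, separating them by disjoint open sets would force two disjoint open sets of measure $1$, exceeding the total mass. Hence $m_f=\delta_y$, and $y=\int_{\mathbb{R}} id\,dm_f=\rho(f)$ by \eqref{rmformula}, with $y\in\operatorname{supp}m_f\subseteq f(X)$.

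The implications \ref{ptm} $\Rightarrow$ \ref{phiout} and \ref{phiout} $\Rightarrow$ \ref{mult} are formal. The first is immediate from \eqref{flamf}: $\rho(\phi\circ f)=\int_{\mathbb{R}}\phi\,dm_f=\phi(\rho(f))$. For the second, I write $g=\phi\circ f$ and $h=\psi\circ f$ in $B(f)$, so that $gh=(\phi\psi)\circ f\in B(f)$; applying \ref{phiout} to $\phi$, $\psi$, and $\phi\psi$ (all continuous and vanishing at $0$) yields $\rho(gh)=(\phi\psi)(\rho(f))=\phi(\rho(f))\,\psi(\rho(f))=\rho(g)\,\rho(h)$.

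The main work is \ref{mult} $\Rightarrow$ \ref{sim}, which I would carry out in two steps. First, $\mu$ takes at most two values: if it assumed more than two, Lemma \ref{rhof1f21} would produce $f_1,f_2\in C_c^+(X)$ lying in a common subalgebra $B(f)$ with $f_1 f_2=0$ and $\rho(f_1)=\rho(f_2)=1$, whereupon multiplicativity on $B(f)$ gives the contradiction $0=\rho(0)=\rho(f_1 f_2)=\rho(f_1)\rho(f_2)=1$. Since $\mu(\emptyset)=0$, the two values are $0$ and some $c>0$, and it remains to show $c=1$. Choosing compact $K$ with $\mu(K)=c$ and $f\in C_c^+(X)$ with $1_K\le f\le 1$, monotonicity of $\mu$ gives $\mu(\operatorname{supp}f)=c$, and then the estimates $\mu(K)\le\rho(f)$ (Remark \ref{RemART}) and $\rho(f)\le\|f\|\,\mu(\operatorname{supp}f)=c$ (Theorem \ref{contCc}) force $\rho(f)=c$; the same bounds applied to $f^2$ give $\rho(f^2)=c$, while multiplicativity gives $\rho(f^2)=\rho(f)^2=c^2$. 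Hence $c=c^2$, so $c=1$ and $\mu$ is simple. I expect this final implication to be the main obstacle, chiefly in pinning down that the single nonzero value must equal $1$ rather than merely establishing that $\mu$ is two-valued.
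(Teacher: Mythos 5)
Your proof is correct, and the first three implications \ref{sim} $\Rightarrow$ \ref{ptm} $\Rightarrow$ \ref{phiout} $\Rightarrow$ \ref{mult} coincide with the paper's argument (the paper deduces $y\in f(X)$ by a short contradiction with \eqref{mfOpen} rather than by citing $\operatorname{supp} m_f\subseteq f(X)$, but this is cosmetic). Where you genuinely diverge is in \ref{mult} $\Rightarrow$ \ref{sim}. The paper argues the contrapositive directly: it picks a compact $K$ with $a=\mu(K)\notin\{0,1\}$ and, splitting into the cases $a<1$ and $a>1$ (the second case requiring a nearby open set $U$ of finite measure $b>1$), chooses test functions $g\ge 1_K$ or $0\le f\le 1$ with $\operatorname{supp} f\subseteq U$ and an $\varepsilon$ calibrated so that $\rho(g^2)>\rho(g)^2$ or $\rho(f^2)<\rho(f)^2$, contradicting multiplicativity on $B(g)$ or $B(f)$. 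You instead first invoke Lemma \ref{rhof1f21} together with the observation that disjointly supported nonnegative functions lie in a common singly generated subalgebra, so multiplicativity forces $0=\rho(f_1f_2)=\rho(f_1)\rho(f_2)=1$; this collapses $\mu$ to at most two values $\{0,c\}$, and a clean squeeze ($c=\mu(K)\le\rho(f)\le\|f\|\,\mu(\operatorname{supp} f)=c$, likewise for $f^2$) yields $c=c^2$, hence $c=1$. Your route is structurally more transparent — it isolates the two-valuedness as the real content (it is in effect the same mechanism as \ref{SvistCon} $\Rightarrow$ \ref{as} in Theorem \ref{rhoas}) and replaces the paper's two-case $\varepsilon$ bookkeeping with a one-line normalization — at the cost of leaning on Lemma \ref{rhof1f21}, Remark \ref{rhoAd}, and Theorem \ref{contCc}, whereas the paper's version is self-contained at the level of test functions. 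Your preliminary remark that a simple $\mu$ is automatically finite (so \eqref{mfOpen} applies) is a point the paper uses tacitly and is worth making explicit.
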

 
\begin{proof}
\ref{sim} $\Longrightarrow$ \ref{ptm}.  
If $\rho$ is simple, i.e. the corresponding topological measure $\mu$ is simple, then
the measure $m_f$ in part \ref{prt1} of Remark \ref{RemART} is a point mass.  
From formula (\ref{rmformula}) we see that  
$m_f$ is a point mass at $y = \rho(f)$. To show
that $y \in f(X)$, suppose the 
opposite, and choose an open set $W$ such that $ y \in W, \ W \cap f(X) = \emptyset$. 
Then $m_f(W) = 1$, while $\mu(f^{-1} (W)) = \mu(\emptyset) = 0$, which contradicts
formula (\ref{mfOpen}). \\
\ref{ptm} $\Longrightarrow$ \ref{phiout}.
Since $m_f$ is a point mass at $y = \rho(f)  \in f(X)$, by formula (\ref{flamf})
\[ \rho( \phi \circ f) = \int_{[a,b]} \phi \, dm_f = \phi(y) = \phi (\rho(f)). \]
\ref{phiout} $\Longrightarrow$   \ref{mult}.
Let $ \phi \circ f, \, \psi \circ f \in B(f)$. Then $ \rho(( \phi \circ f) \, (\psi \circ f)) = \rho( ( \phi \, \psi)\circ f)) =
(\phi \,  \psi)( \rho(f)) = \phi(\rho(f)) \,  \psi(\rho(f)) = \rho(\phi \circ f) \,  \rho( \psi \circ f)$. \\
\ref{mult} $\Longrightarrow$ \ref{sim} 
Suppose to the contrary that $\rho$ is not simple, i.e. the corresponding compact-finite
topological measure $\mu$ is not simple.  Then  there is a compact set $K \subseteq X$ 
with $a= \mu(K) \in \mathbb{R}, \, a \neq 0,1$. 
Suppose first $a <1$.  Let $\epsilon >0$ be such that $a > (a + \epsilon)^2 $.
By part \ref{mrSets3} of Remark \ref{RemART} choose $ g \in C_c(X), g \ge 1_K$ with $ \rho(g) < a+\epsilon$.
Note that $g^2 \in B(g)$ and $g^2 \ge 1_K$. Then $ \rho(g^2) \ge \mu(K)=a > (a+\epsilon)^2 > (\rho(g))^2$, and 
$\rho$ is not multiplicative on  $B(g)$. \\
Assume now that $ a>1$. There is an open set $U$ (containing $K$) such that $\mu(U) =b \in \mathbb{R}$ and $ b >1$.
Let $\epsilon >0$ be such that 
$b< (b - \epsilon)^2 $. By part \ref{mrSets3} of Remark \ref{RemART} 
let $f \in C_c(X)$ be such that  
$0 \le f \le 1, supp \  f \subseteq U, $ and $b-\epsilon < \rho(f)$. Since $f^2 \in B(f)$  and $ supp \  f^2 \subseteq U$ we have
$\rho(f^2) \le b < (b - \epsilon)^2< ( \rho(f)) ^2 $, and $\rho$ is not multiplicative on $B(f)$. 
\end{proof}

\begin{theorem} \label{rhoas}
Let $X$ be locally compact. 
The following are equivalent for a quasi-integral $\rho$ on $ C_c(X)$:
\begin{enumerate}[label=(\roman*),ref=(\roman*)]
\item \label{as} 
$\rho$ is almost simple.
\item \label{SvistCon}
If $ f g = 0$ then $\rho(f) \rho(g) = 0$.
\item \label{SvistCon+}
If $ f g = 0, \, f,g \ge 0 $ then $\rho(f) \rho(g) = 0$.
\item \label{Con-}
If $ f g = 0, \, f,g \le 0 $ then $\rho(f) \rho(g) = 0$.
\end{enumerate}
\end{theorem}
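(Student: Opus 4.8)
The plan is to establish the equivalence through the cycle (i) $\Rightarrow$ (ii) $\Rightarrow$ (iii) $\Rightarrow$ (i), supplemented by the trivial specialization (ii) $\Rightarrow$ (iv) and a direct argument (iv) $\Rightarrow$ (i), so that all four statements fall into a single equivalence class. The implications (ii) $\Rightarrow$ (iii) and (ii) $\Rightarrow$ (iv) are immediate, since (iii) and (iv) are merely (ii) restricted to pairs of functions of one sign. The genuine content lies in (i) $\Rightarrow$ (ii) and in recovering almost simplicity from the product condition.

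For (i) $\Rightarrow$ (ii), I would first record that almost simplicity scales cleanly through the representation. Writing $\mu = c\,\nu$ with $c>0$ and $\nu$ simple (Definition \ref{mulsimqi}), the Stieltjes data of part \ref{prt1} of Remark \ref{RemART} satisfy $m^{\mu}_f = c\,m^{\nu}_f$ for every $f$, since $m_f(W) = \mu(f^{-1}(W)\setminus\{0\})$ scales by $c$; hence $\rho = \rho_\mu = c\,\sigma$, where $\sigma = \rho_\nu$ is a simple quasi-integral and therefore multiplicative on each singly generated subalgebra by Theorem \ref{rhosimple}. Now take $f,g \in C_c(X)$ with $fg = 0$ and split them as $f = f^+ - f^-$, $g = g^+ - g^-$. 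Because $fg=0$ forces $\{f \neq 0\} \cap \{g \neq 0\} = \emptyset$, each of the four products $f^{\pm} g^{\pm}$ vanishes identically; by Remark \ref{rhoAd} each such pair of nonnegative functions lies in a common singly generated subalgebra, so multiplicativity of $\sigma$ together with $\sigma(0)=0$ gives $\sigma(f^{\pm})\,\sigma(g^{\pm}) = \sigma(f^{\pm} g^{\pm}) = 0$. Expanding $\sigma(f)\sigma(g) = (\sigma(f^+) - \sigma(f^-))(\sigma(g^+) - \sigma(g^-))$ then yields $\sigma(f)\sigma(g)=0$, and hence $\rho(f)\rho(g) = c^2\,\sigma(f)\sigma(g) = 0$.

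For the return, I would prove the contrapositive of ``(iii) or (iv) $\Rightarrow$ (i)'' using Lemma \ref{rhof1f21}. If $\mu$ assumed more than two values, that lemma produces $f_1,f_2 \in C_c^+(X)$ with $f_1 f_2 = 0$ and $\rho(f_1) = \rho(f_2) = 1$; then $\rho(f_1)\rho(f_2) = 1 \neq 0$ contradicts (iii), and passing to the nonpositive functions $-f_1,-f_2$, for which $\rho(-f_1)\rho(-f_2) = \rho(f_1)\rho(f_2) = 1$, contradicts (iv). Thus either condition forces $\mu$ to assume at most two values. Since $\mu(\emptyset)=0$, the two values are $0$ and some $c>0$; compact-finiteness rules out $c=\infty$, for if $\mu$ took values in $\{0,\infty\}$ then $\mu(K)=0$ on all compacta, whence $\mu\equiv 0$ by \ref{TM2} and \ref{TM3}, contradicting that $\infty$ is attained. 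Hence $0<c<\infty$ and $\mu = c\,(c^{-1}\mu)$ is a positive multiple of a simple topological measure, so $\rho$ is almost simple (the degenerate case $\mu\equiv 0$ being vacuous).

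The step I expect to be the main obstacle is (i) $\Rightarrow$ (ii): one must pass from a sign-free product condition on $\sigma$ to arbitrary $f,g$, and the cleanest route is the $\pm$-decomposition above, whose validity rests on the pointwise observation that $fg=0$ separates the supports of $f^{\pm}$ and $g^{\pm}$ and on each resulting pair sitting in a common singly generated subalgebra. Verifying the scaling identity $\rho = c\,\sigma$ and confirming that a compact-finite two-valued topological measure cannot take the value $\infty$ are the remaining routine but necessary checks, while the reverse directions follow almost formally from Lemma \ref{rhof1f21}.
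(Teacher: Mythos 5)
Your proof is correct, and its skeleton matches the paper's: Lemma \ref{rhof1f21} drives the converse direction, the $f^{\pm},g^{\pm}$ decomposition bridges the signed and unsigned product conditions, and two-valuedness of $\mu$ is what ultimately characterizes almost simplicity. The one place you genuinely diverge is the forward implication. The paper proves (i) $\Rightarrow$ (ii) directly from the representation: for almost simple $\rho$ the measures $m_f=c\delta_y$ and $m_g=k\delta_z$ are weighted point masses, and $fg=0$ forces one of the atoms to sit at $0$ (because otherwise $\{f\neq 0\}$ and $\{g\neq 0\}$ would be disjoint open sets each of $\mu$-value $c$, contradicting two-valuedness) -- a step the paper leaves largely implicit. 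You instead factor $\rho=c\sigma$ with $\sigma$ simple and invoke multiplicativity on singly generated subalgebras (Theorem \ref{rhosimple}\ref{mult}) together with Remark \ref{rhoAd}, getting $\sigma(f^{\pm})\sigma(g^{\pm})=\sigma(f^{\pm}g^{\pm})=\sigma(0)=0$ and expanding. This buys you a forward direction that needs no analysis of where the atoms of $m_f$ sit, at the cost of routing through Theorem \ref{rhosimple} and folding the $\pm$-decomposition (which the paper uses only for (ii) $\Leftrightarrow$ (iii)) into the main implication. You are also more careful than the paper on two points it glosses over: ruling out the value $\infty$ for a two-valued compact-finite topological measure, and the degenerate case $\mu\equiv 0$. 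Both proofs are sound; yours is slightly longer but more self-auditing.
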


\begin{proof}
\ref{as}   $\Longrightarrow$ \ref {SvistCon}.
If $ \rho$ is almost simple, the topological measure $ \mu$ corresponding to $ \rho$ assumes two values, and the measure
$m_f$  supported on $f(X)$  as in Remark \ref{RemART} has the form $m_f = c \delta_y$, where $ c >0, y \in f(X)$.  
For  $g$ the measure $m_g = k \delta_z$, where $k >0, z \in g(X)$. Then  $ \rho(f) \rho(g) = ck f(y) g(z) = 0$.   
\ref{SvistCon}   $\Longrightarrow$ \ref{as}.
Lemma \ref{rhof1f21} shows that the topological measure corresponding to $ \rho$ can not assume more than 2 values. Thus, $ \rho$ is almost simple. 
The equivalence of \ref{SvistCon+} and \ref{SvistCon} follows from $\rho(f) = \rho(f^+) - \rho(f^-), \rho(g) = \rho(g^+) - \rho(g^-) $ 
(see Remark \ref{rhoAd})
and the fact that the product of any two of $f^+, f^-, g^+, g^-$ is $0$. Equivalence of \ref{SvistCon+} and \ref {Con-} is obvious.
\end{proof}

\begin{remark}
If a compact-finite topological measure $\mu$ assumes only two values, but not $0,1$, then 
$ \mu$ is almost simple. Write $ \mu = c \mu',$ where $ \mu'$ is simple and $ c>0$. Then $ \rho = c \rho'$, where quasi-integrals $\rho$ and $ \rho'$
correspond to $ \mu$ and $ \mu'$. Quasi-integral $ \rho$ is no longer multiplicative on singly generated subalgebras.
\end{remark}

\section{Repeated quasi-integrals} \label{Products}

Let $\mu$ be a compact-finite topological measure on $X$ with corresponding quasi-integral $\rho$, 
and $\nu$ be a  compact-finite topological measure on $Y$ with corresponding quasi-integral $\eta$. 

\begin{definition}  \label{basicProd}
For a set $A$ in $X \times Y$ let $A_y = \{ x: \ (x,y) \in A\}$, and let $A_x = \{ y: \ (x,y) \in A\}$. 
\end{definition}

\begin{remark} \label{fibers}
If the set $A$ is closed/compact/open then so is the set $A_y$. 
Note also that $(A \setminus B)_y = A_y \setminus B_y$.
\end{remark}

Let $ f \in C_c (X \times Y)$. Define continuous functions $f_y$ on $X$ and $f_x$ on $Y$ by $f_y(x) = f(x,y) = f_x(y)$. 
Compact $C = \pi_1(supp \  f)$, where 
$\pi_1: X \times Y \rightarrow X$ is the canonical projection, contains $supp \  f_y$ for any $y$. 
We have $f_y \in C_c(X)$, and $f_x \in C_c(Y)$.

\begin{definition} \label{TrhoSeta}
Define real-valued functions $T_\rho (f)$ on $Y$ and $S_\eta (f)$ on $X$
by:  
$$T_\rho (f) (y) = \rho(f_y), \ \ \ \ \ \ \ \ S_\eta (f) (x) = \eta(f_x).$$
\end{definition}

\begin{proposition} \label{TrhoCc}
Suppose $X \times Y$ is locally compact and $ f \in C_c(X \times Y)$. If $\rho$ is a quasi-integral on $C_c(X)$ 
then $T_\rho (f) \in C_c(Y) $ and $ \parallel T_\rho (f) \parallel  \le  \parallel f \parallel \mu(\pi_1(supp \  f))$.  Similarly, if 
$\eta$  is a quasi-integral on $C_c(Y)$ then  $S_\eta (f) \in C_c(X)$ and 
$ \parallel S_\eta (f) \parallel  \le  \parallel f \parallel \nu(\pi_2(supp \  f))$. Here $\pi_1: X \times Y \rightarrow X$ and $\pi_2: X \times Y \rightarrow Y$ 
are canonical projections.
\end{proposition}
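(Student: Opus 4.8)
The plan is to verify, in order, (i) the norm bound, (ii) that $T_\rho(f)$ has compact support, and (iii) that $T_\rho(f)$ is continuous; the claim for $S_\eta$ will then follow by interchanging the roles of $X$ and $Y$. Throughout I write $K = supp\, f$, a compact subset of $X \times Y$, and set $C = \pi_1(K)$ and $D = \pi_2(K)$, both compact. As noted before Definition \ref{TrhoSeta}, for each $y$ we have $f_y \in C_c(X)$ with $supp\, f_y \subseteq C$, and plainly $\|f_y\| \le \|f\|$.

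For the norm bound I would apply the pointwise estimate $|\rho(h)| \le \|h\|\,\mu(supp\, h)$ from Theorem \ref{contCc} to $h = f_y$. Since $supp\, f_y \subseteq C$ and a topological measure is monotone, $\mu(supp\, f_y) \le \mu(C)$; combined with $\|f_y\| \le \|f\|$ this gives $|T_\rho(f)(y)| = |\rho(f_y)| \le \|f\|\,\mu(C)$ for every $y$, and taking the supremum over $y$ yields $\|T_\rho(f)\| \le \|f\|\,\mu(\pi_1(supp\, f))$; note $\mu(C) < \infty$ because $\mu$ is compact-finite. For compact support, observe that if $y \notin D$ then $(x,y) \notin K$ for all $x$, so $f_y \equiv 0$ and $T_\rho(f)(y) = \rho(0) = 0$ by Remark \ref{rhoAd}; hence $T_\rho(f)$ vanishes off the compact set $D$.

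The main work is continuity, which I would reduce to the claim that $y \mapsto f_y$ is continuous from $Y$ into $(C_c(X), \|\cdot\|)$. Fix $y_0 \in Y$ and $\epsilon > 0$. Since $f$ vanishes off $C$ in the first variable, it suffices to control $|f(x,y) - f(x,y_0)|$ for $x \in C$. For each $x \in C$ use continuity of $f$ at $(x, y_0)$ to choose open neighborhoods $U_x \ni x$ and $W_x \ni y_0$ on which $|f - f(x,y_0)| < \epsilon/2$; by compactness of $C$ finitely many $U_{x_1}, \dots, U_{x_n}$ cover $C$, and $W = \bigcap_{i} W_{x_i}$ is a neighborhood of $y_0$. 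A triangle-inequality argument routed through the value $f(x_i, y_0)$ (legitimate because both $(x,y)$ and $(x,y_0)$ lie in $U_{x_i} \times W_{x_i}$ when $x \in U_{x_i}$ and $y \in W$) then gives $\|f_y - f_{y_0}\| < \epsilon$ for all $y \in W$. This tube-lemma style argument is the step I expect to be the main obstacle, since everything else is a direct application of the cited continuity estimate.

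Finally I would feed this into the last estimate of Theorem \ref{contCc}: because $supp\, f_y$ and $supp\, f_{y_0}$ are contained in $C$, we have $|\rho(f_y) - \rho(f_{y_0})| \le 2\|f_y - f_{y_0}\|\,\mu(C)$, so $|T_\rho(f)(y) - T_\rho(f)(y_0)| < 2\epsilon\,\mu(C)$ for $y \in W$. As $\mu(C) < \infty$ and $\epsilon$ is arbitrary, $T_\rho(f)$ is continuous at $y_0$; since $y_0$ was arbitrary, $T_\rho(f) \in C(Y)$, and with the compact-support observation, $T_\rho(f) \in C_c(Y)$. The estimate and continuity for $S_\eta(f)$ are obtained by the identical argument with $(\rho, X, \pi_1, C)$ replaced by $(\eta, Y, \pi_2, D)$.
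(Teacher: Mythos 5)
Your proof is correct and follows essentially the same route as the paper's: the tube-lemma argument establishing $\parallel f_y - f_{y_0}\parallel < \epsilon$ for $y$ near $y_0$, fed into the estimate $|\rho(f_y)-\rho(f_{y_0})| \le 2\parallel f_y - f_{y_0}\parallel \mu(C)$ from Theorem \ref{contCc}, is exactly the paper's argument, as is the norm bound. The only (minor) deviation is the compact-support step, where you use the compact set $\pi_2(supp \ f)$ directly rather than the paper's finite subcover of $supp \ f$ by relatively compact product neighborhoods; this is a slight simplification, not a different method.
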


\begin{proof}

Let $y \in Y$. We shall show that $T_\rho (f)$ is a continuous function at $y$. 
Compact $C = \pi_1(supp \  f)$ contains  $ supp \  f_y$.
Let $ \epsilon >0$. 
For each $x \in C$ let $U_x$ be a neighborhood of $x$ and $V_{x,y}$ be a neighborhood of $y$ such that 
$| f(x,y) - f(x', y') | < \epsilon $ whenever  $(x',y') \in U_x \times  V_{x,y}$. Open sets $U_x$ cover $C$, so let 
$U_{x_1}, \ldots, U_{x_n}$ be a finite subcover, and let $V_y = \bigcap_{i=1}^n V_{x_i, y} $. Take $y' \in V_y$. 
Then for each $x \in C$ there is $i$ such that $x \in U_{x_i} $, so $(x, y') \in  U_{x_i} \times V_y \subseteq U_{x_i} \times V_{x_i,y} $, 
and then $ | f_y(x) - f_{y'} (x) | =| f(x,y) - f(x, y') | < \epsilon $.  For any $x \notin C$ we have $ f_y(x) = f_{y'} (x)=0$.
Therefore, $ \parallel f_y - f_{y'} \parallel < \epsilon$  (for any $y' \in V_y$).
Since $supp \  f_y, supp \  f_{y'} \subseteq C$, by Theorem \ref{contCc} for any $y' \in V_y$ we have:
$$ | T_\rho (f)(y) - T_\rho (f)(y') | = | \rho(f_y) - \rho(f_{y'} |  \le 2 \parallel f_y - f_{y'} \parallel  \mu(C)  < 2 \epsilon \mu(C),$$
and the continuity of $T_\rho (f)$ at $y$ follows.

From Theorem \ref{contCc}  for any $y$ we have $ | T_\rho (f) (y) | = | \rho(f_y)| \le \parallel f_y \parallel \mu(C)  \le \parallel f \parallel \mu(C). $
Thus, 
$$ \parallel T_\rho (f) \parallel \le \parallel f \parallel \mu(C) =   \parallel f \parallel \mu(\pi_1(supp \  f)).$$

Since $X$ and $Y$ are locally compact, for $x \in X$ let $U(x)$ be a 
neighborhood of $x$ such that $\overline{ U(x)}$ is compact in $X$, 
and let $V(y)$ be a neighborhood of $y \in Y$ such that $\overline{ V(y)}$ is compact in $Y$. 
Open sets $U(x) \times V(y)$ cover $supp \  f$, so let $U_1\times V_1, \ldots, U_n\times V_n$ be
a finite subcover of $supp \  f$. Let $G = \overline V_1 \cup \ldots \cup \overline V_n$, a compact in $Y$. 
For each $x \in X$ and each $ y \notin G$ we have$(x,y) \notin supp \  f$, so $f(x,y) = 0$.
This means that $f_y = 0$ for each $y \notin G$. Then $\rho(f_y) =0$, i.e. 
$T_\rho(f) (y) = \rho(f_y) = 0$ for each $ y\notin G$. Hence, $T_\rho (f) \in C_c(Y)$.
\end{proof}

We are now ready to define repeated quasi-integrals.
\begin{definition} \label{erre}
Using Definition \ref{TrhoSeta} and Proposition \ref{TrhoCc} 
we define the following functionals on  $C_c(X \times Y)$:  
\[ (\eta \times \rho) (f) =\eta( T_\rho(f)) = \int_Y T_\rho(f) \, d \nu, \]
\[  (\rho \times \eta) (f) = \rho (S_\eta (f)) = \int_X S_\eta (f) \,  d \mu.\]
\end{definition}

\begin{remark}
Functionals $\eta \times \rho$ and $\rho \times \eta$ are real-valued since $ \eta$ and $ \rho$ are quasi-integrals.
\end{remark}

\begin{lemma} \label{mainAlmsim}
Suppose $ \eta$ and $ \rho$ are quasi-integrals. Let $ \rho = c \rho'$, where $ c >0$. Then 
$\eta \times \rho$ is a quasi-integral iff $ \eta \times \rho'$ is a quasi-integral, and $ \eta \times \rho = c (  \eta \times \rho')$. Similarly, 
if $ \eta = k \eta', \, k>0$ then $ \eta \times \rho$  is a quasi-integral iff $  \eta' \times \rho$ is a quasi-integral, and $ \eta \times \rho= k (\eta' \times \rho)$. 
\end{lemma}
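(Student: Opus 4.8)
The plan is to reduce the whole statement to the observation that the auxiliary function $T_\rho(f)$ depends positively homogeneously on $\rho$, together with the elementary fact that multiplying a functional by a fixed positive scalar preserves each of the three defining properties \ref{QIconsLC}, \ref{QIlinLC}, \ref{QIpositLC} in both directions.

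First I would record the pointwise computation. Since $\rho' = \frac1c \rho$ again satisfies \ref{QIconsLC}--\ref{QIpositLC}, it is a quasi-integral, so $T_{\rho'}(f)$ and hence $\eta \times \rho'$ are well-defined by Proposition \ref{TrhoCc} and Definition \ref{erre}. For each $f \in C_c(X \times Y)$ and each $y \in Y$ one has $T_\rho(f)(y) = \rho(f_y) = (c\rho')(f_y) = c\,\rho'(f_y) = c\,T_{\rho'}(f)(y)$, so that $T_\rho(f) = c\,T_{\rho'}(f)$ as elements of $C_c(Y)$. Applying $\eta$ and invoking its homogeneity \ref{QIconsLC} gives $(\eta \times \rho)(f) = \eta\bigl(c\,T_{\rho'}(f)\bigr) = c\,\eta\bigl(T_{\rho'}(f)\bigr) = c\,(\eta \times \rho')(f)$, which is exactly the claimed identity $\eta \times \rho = c(\eta \times \rho')$.

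Next I would verify the general principle that for a functional $\Psi$ and $\Phi = c\Psi$ with $c>0$, the functional $\Phi$ is a quasi-integral iff $\Psi$ is: homogeneity \ref{QIconsLC} of $\Phi$ is equivalent to that of $\Psi$; additivity on a subalgebra $B(f)$ as in \ref{QIlinLC} transfers because the nonzero factor $c$ cancels; and positivity \ref{QIpositLC} transfers because $c>0$. Applying this with $\Psi = \eta \times \rho'$ and $\Phi = \eta \times \rho$ yields that $\eta \times \rho$ is a quasi-integral iff $\eta \times \rho'$ is.

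Finally, the case $\eta = k\eta'$ with $k>0$ is even more direct and needs no appeal to homogeneity: for every $f$ one has $(\eta \times \rho)(f) = \eta\bigl(T_\rho(f)\bigr) = (k\eta')\bigl(T_\rho(f)\bigr) = k\,\eta'\bigl(T_\rho(f)\bigr) = k\,(\eta' \times \rho)(f)$, and the same scalar-multiple principle gives the equivalence. I do not expect a genuine obstacle here; the only point requiring a moment's care is confirming that $\rho'$ (respectively $\eta'$) is itself a quasi-integral so that $\eta \times \rho'$ (respectively $\eta' \times \rho$) is defined, which holds since $\frac1c\rho$ (respectively $\frac1k\eta$) again satisfies \ref{QIconsLC}--\ref{QIpositLC}.
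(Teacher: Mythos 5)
Your proposal is correct and follows essentially the same route as the paper: compute $T_\rho(f) = c\,T_{\rho'}(f)$ pointwise, use homogeneity of $\eta$ to get $\eta \times \rho = c(\eta \times \rho')$, and observe that the two functionals satisfy \ref{QIconsLC}--\ref{QIpositLC} simultaneously. The extra check that $\rho'$ (resp.\ $\eta'$) is itself a quasi-integral is a reasonable added precaution but changes nothing substantive.
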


\begin{proof}
We have $T_{\rho}(f) (y)  =cT_{\rho'}(f) (y)$ for every $y$, i.e. $T_{\rho}(f)  = cT_{\rho'}$. Since $ \eta $ is homogeneous, 
we see that $\eta \times \rho= c ( \eta \times \rho')$. 
Since $\eta \times \rho$ and $\eta \times \rho'$ satisfy conditions \ref{QIconsLC}, \ref{QIlinLC}, and \ref{QIpositLC} of Definition \ref{QI} simultaneously, 
$ \eta \times \rho$ is a quasi-integral iff $\eta \times \rho'$ is. 
The proof of the last part is similar but easier. 
\end{proof}

We would like to know whether $\eta \times \rho = \rho \times \eta$. We shall see later that, unlike the case of linear functionals, this is not usually the case.

For $g \in C_c(X)$ and $h \in C_c(Y)$ let $(g \otimes h)(x,y) = g(x)h(y)$. Then 
$(g \otimes h) \in C_c (X \times Y)$.  

\begin{proposition} \label{gtimh}
\begin{enumerate}[label=(\arabic*),ref=(\arabic*)]
\item
 $T_\rho (g \otimes h)  = \rho(g) \ h$.
\item \label{nrgh}
$ (\eta \times \rho) (g \otimes h) = (\rho \times \eta) (g \otimes h)  =  \rho(g) \eta(h). $
\end{enumerate}
\end{proposition}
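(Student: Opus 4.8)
The whole statement reduces to a single observation: the fibers of a tensor product $g\otimes h$ are scalar multiples of the generating function, so everything follows from homogeneity of the quasi-integrals (condition \ref{QIconsLC}). No inner/outer regularity, no structure of the measures $m_f$, and none of the continuity machinery of Proposition \ref{TrhoCc} is actually needed here beyond knowing that the functionals are well defined.

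First I would prove part (1). Fix $y\in Y$. By the definition of the fiber function, for every $x\in X$ we have
\[
(g\otimes h)_y(x) = (g\otimes h)(x,y) = g(x)\,h(y) = \big(h(y)\,g\big)(x),
\]
so $(g\otimes h)_y = h(y)\,g$ as an element of $C_c(X)$, where $h(y)$ is a fixed real scalar. Applying homogeneity of $\rho$ gives $\rho\big((g\otimes h)_y\big) = h(y)\,\rho(g)$. By Definition \ref{TrhoSeta} this says $T_\rho(g\otimes h)(y) = \rho(g)\,h(y)$ for every $y$, i.e. $T_\rho(g\otimes h) = \rho(g)\,h$, which is (1).

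Next I would deduce part (2) by feeding (1) into Definition \ref{erre}. Since $\rho(g)$ is a constant and $\eta$ is homogeneous,
\[
(\eta\times\rho)(g\otimes h) = \eta\big(T_\rho(g\otimes h)\big) = \eta\big(\rho(g)\,h\big) = \rho(g)\,\eta(h).
\]
For the other order I would run the symmetric computation on the $x$-fiber: $(g\otimes h)_x = g(x)\,h$, so homogeneity of $\eta$ gives $S_\eta(g\otimes h)(x) = g(x)\,\eta(h)$, that is $S_\eta(g\otimes h) = \eta(h)\,g$. Then homogeneity of $\rho$ yields
\[
(\rho\times\eta)(g\otimes h) = \rho\big(S_\eta(g\otimes h)\big) = \rho\big(\eta(h)\,g\big) = \eta(h)\,\rho(g),
\]
and comparing the two displays gives the common value $\rho(g)\,\eta(h)$.

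\textbf{Main obstacle.} Honestly there is no substantive obstacle: the argument is a direct computation, and the only thing to be careful about is keeping straight which variable is frozen when forming $f_y$ versus $f_x$, and invoking homogeneity of the correct functional at each step. The content of the proposition is conceptual rather than technical—it records that repeated quasi-integration behaves like an ordinary (Fubini-type) integral on \emph{product} functions, which is exactly the class on which no nonlinearity of the quasi-integrals can manifest. The genuine difficulties, arising from general $f\in C_c(X\times Y)$ that are not sums of such products, are deferred to the later results on when $\eta\times\rho$ is a quasi-integral and when $\eta\times\rho=\rho\times\eta$.
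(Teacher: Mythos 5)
Your proof is correct and is essentially the argument the paper intends: the paper simply cites Proposition 1 of \cite{Grubb:Products}, whose content is exactly this computation that the fibers of $g\otimes h$ are scalar multiples of $g$ (resp.\ $h$), so homogeneity \ref{QIconsLC} of $\rho$ and $\eta$ does all the work. Your writeup just makes explicit what the paper leaves to the reference.
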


\begin{proof}
The proof is as in Proposition 1 in \cite{Grubb:Products}.
\end{proof}

\begin{remark} \label{munuXY}
If $\mu$ is the compact-finite topological measure corresponding to quasi-linear functional $\rho$, 
$\nu$ is the compact-finite topological measure corresponding to $ \eta$, 
let $\nu \times \mu$ be the compact-finite topological measure corresponding to $ \eta \times \rho$, and 
$\mu \times \nu$ be the compact-finite topological measure corresponding to $ \rho \times \eta$.
Using  part \ref{nrgh} of Proposition \ref{gtimh} and part \ref{mrSets3} of Remark \ref{RemART} it is easy to see that 
$$ \mu(X)  \nu(Y) \le (\nu \times \mu) ( X \times Y), \ \ \   \mu(X)  \nu(Y) \le  (\mu \times \nu) (X \times Y). $$
By Definition \ref{erre}, Theorem \ref{contCc}, and Proposition \ref{TrhoCc}
the opposite inequalities also hold, so
$$ (\nu \times \mu) ( X \times Y) =  \mu(X)  \nu(Y), \ \ \   (\mu \times \nu) (X \times Y) = \mu(X)  \nu(Y).$$
\end{remark}

\begin{remark} \label{HoPo}
It is easy to see that  if $ \eta$ and $ \rho$ are homogeneous, then so are  $\eta \times \rho$ and $\rho \times \eta$;
and that  if $ \eta$ and $ \rho$ are positive, then so are $\eta \times \rho$ and $\rho \times \eta$.  
Yet, $\rho \times \eta$ and $\eta \times \rho$ are not always quasi-integrals. The criterion for $\eta \times \rho$ to be a quasi-integral is given in 
Theorem \ref{nurhoqiTa} below. 
\end{remark}

\begin{theorem} \label{nurhoqiT}
\begin{enumerate}[label=(\arabic*),ref=(\arabic*)]
\item
If the compact-finite topological measure corresponding to $ \rho$ assumes more than two values and $\eta \times \rho$ is a quasi-integral, then 
$\eta$ is linear.
\item \label{simpleLin}
If $\eta$ is linear or $\rho$ is simple then $\eta \times \rho$ is a quasi-linear functional. 
\end{enumerate}
\end{theorem}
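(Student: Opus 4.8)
The plan is to lean on Remark \ref{HoPo}, which already tells us that $\eta \times \rho$ is homogeneous \ref{QIconsLC} and positive \ref{QIpositLC}; hence in both parts the entire question reduces to the subalgebra-linearity condition \ref{QIlinLC}. The workhorse for part \ref{simpleLin} will be a fiberwise additivity identity valid for \emph{any} quasi-integral $\rho$: if $G = \phi \circ F$ and $H = \psi \circ F$ lie in a common subalgebra $B(F)$, then $G_y = \phi \circ F_y$ and $H_y = \psi \circ F_y$ lie in $B(F_y)$, so $\rho(G_y + H_y) = \rho(G_y) + \rho(H_y)$ by \ref{QIlinLC} for $\rho$, and evaluating at each $y$ gives $T_\rho(G + H) = T_\rho(G) + T_\rho(H)$.

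For part \ref{simpleLin} I would fix $F \in C_c(X \times Y)$ and $G = \phi \circ F, \, H = \psi \circ F \in B(F)$. When $\eta$ is linear, the fiberwise identity immediately yields $(\eta \times \rho)(G + H) = \eta(T_\rho(G) + T_\rho(H)) = \eta(T_\rho(G)) + \eta(T_\rho(H))$, which is $(\eta \times \rho)(G) + (\eta \times \rho)(H)$. When instead $\rho$ is simple, I would use Theorem \ref{rhosimple} in the form $\rho(\phi \circ f) = \phi(\rho(f))$ to upgrade additivity into the intertwining relation $T_\rho(\phi \circ F)(y) = \rho(\phi \circ F_y) = \phi(\rho(F_y)) = \phi(T_\rho(F)(y))$, i.e. $T_\rho(\phi \circ F) = \phi \circ T_\rho(F)$. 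Since $T_\rho(F) \in C_c(Y)$ by Proposition \ref{TrhoCc} and its range is contained in $F(X \times Y)$ (because $\rho(F_y) \in F_y(X)$ for simple $\rho$ by Theorem \ref{rhosimple}), both $T_\rho(G) = \phi \circ T_\rho(F)$ and $T_\rho(H) = \psi \circ T_\rho(F)$ lie in the single subalgebra $B(T_\rho(F))$. Applying \ref{QIlinLC} for $\eta$ on that subalgebra then gives $(\eta \times \rho)(G + H) = (\eta \times \rho)(G) + (\eta \times \rho)(H)$ as before.

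For part (1) I would argue the converse direction. Assuming the topological measure of $\rho$ assumes more than two values and that $\eta \times \rho$ is a quasi-integral, I want to deduce $\eta(g + h) = \eta(g) + \eta(h)$ for arbitrary $g, h \in C_c(Y)$. By Lemma \ref{rhof1f21} I would pick $f_1, f_2 \in C_c^+(X)$ with $f_1 f_2 = 0$ and $\rho(f_1) = \rho(f_2) = 1$, and set $F = (f_1 \otimes g) + (f_2 \otimes h) \in C_c(X \times Y)$. Reading $F$ fiberwise, $F_y = g(y) f_1 + h(y) f_2$ is a sum of functions with zero product, so Remark \ref{rhoAd} and homogeneity give $\rho(F_y) = g(y) + h(y)$, whence $T_\rho(F) = g + h$ and $(\eta \times \rho)(F) = \eta(g + h)$. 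Reading $F$ instead as a sum of the two tensor products, whose product vanishes since $f_1 f_2 = 0$, Remark \ref{rhoAd} applied to the quasi-integral $\eta \times \rho$ together with part \ref{nrgh} of Proposition \ref{gtimh} gives $(\eta \times \rho)(F) = \rho(f_1)\eta(g) + \rho(f_2)\eta(h) = \eta(g) + \eta(h)$. Comparing the two evaluations forces $\eta(g + h) = \eta(g) + \eta(h)$, so $\eta$ is linear.

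I expect the delicate point to be the simple-$\rho$ case of part \ref{simpleLin}: everything there hinges on promoting fiberwise additivity (available for every quasi-integral) to the operator identity $T_\rho(\phi \circ F) = \phi \circ T_\rho(F)$, which says $T_\rho$ carries the whole subalgebra $B(F)$ into the single subalgebra $B(T_\rho(F))$ on which $\eta$ is linear; the one routine verification not to skip is the range condition $T_\rho(F)(Y) \subseteq F(X \times Y)$, needed so that $\phi \circ T_\rho(F)$ genuinely belongs to $B(T_\rho(F))$. In part (1) the key is the double reading of the single test function $F$ — once fiberwise to produce $g + h$, and once as a disjoint-support sum to produce $\eta(g) + \eta(h)$ — the two readings being reconciled precisely because $\eta \times \rho$ is assumed to be a quasi-integral.
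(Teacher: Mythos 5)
Your proof is correct and takes essentially the same route as the paper: part (1) is the identical double evaluation of $f_1 \otimes g + f_2 \otimes h$ (once fiberwise via $T_\rho$, once as a disjoint-support sum using that $\eta \times \rho$ is a quasi-integral), and part (2) is the fiberwise-additivity argument plus, for simple $\rho$, the intertwining identity $T_\rho(\phi \circ F) = \phi \circ T_\rho(F)$. The only difference is that you spell out in full (including the range check $T_\rho(F)(Y) \subseteq F(X \times Y)$ needed to place $\phi \circ T_\rho(F)$ in $B(T_\rho(F))$) the details that the paper defers to Grubb's Theorem 1.
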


\begin{proof}
\begin{enumerate}
\item
Our proof follows part of Theorem 1 in \cite{Grubb:Products}. It is given for completeness and 
because intermediate results from the proof are needed elsewhere in the paper. 
Suppose that $\eta \times \rho$ is a quasi-integral and the compact-finite topological measure $ \mu$ corresponding to $ \rho$
assumes more than two values.
By Lemma \ref{rhof1f21} choose 
$f_1, f_2 \in C_c(X)$ such that  $\rho(f_1) = \rho(f_2) =1$ and  $f_1 f_2 = 0$. 
Take any $g,h \in C_c(Y)$. 
For any $ y \in Y$,  $ \ (g(y) f_1)(h(y) f_2) = 0,$ so 
using Remark \ref{rhoAd} we have:
\begin{align*}
T_\rho (f_1 \otimes  g &+ f_2 \otimes h)(y) =\rho( (f_1 \otimes  g + f_2 \otimes h)_y) = \rho(g(y) f_1 + h(y) f_2) \\
&= \rho(g(y) f_1)  + \rho(h(y) f_2) = g(y) \rho(f_1) + h(y) \rho(f_2), 
\end{align*}
i.e. 
\begin{align} \label{Trho}
T_\rho(f_1 \otimes g + f_2 \otimes h) = \rho(f_1) g + \rho(f_2) h = g + h. 
\end{align}
Since $\eta \times \rho$ is a quasi-integral and $(f_1 \otimes g)(f_2 \otimes h) =0$,  
using Proposition \ref{gtimh} and Remark \ref{rhoAd} we see that 
\begin{align*}
\eta(g) &+ \eta(h) =\rho(f_1) \eta(g) + \rho(f_2) \eta (h) \\ 
&=(\eta \times \rho)(f_1 \otimes g) + (\eta \times \rho) (f_2 \otimes h) 
= (\eta \times \rho) (f_1 \otimes g + f_2 \otimes h) \\
&= \eta (T_\rho (f_1 \otimes  g + f_2 \otimes h)) = \eta (g+h).
\end{align*}
Thus, $\eta$ is linear on $C(Y)$.

\item
To show that $\eta \times \rho$ is a quasi-integral,  by Remark \ref{HoPo} we need to 
check condition \ref{QIlinLC} of Definition \ref{QI}. 
Let  $ \phi, \psi \in C([a,b])$ where $[a,b]$ contains 
the ranges of $f$ and $T_\rho(f)$.
Recall that $f_y \in C_c(X)$ and note that $(\phi \circ f)_y = \phi \circ f_y$. In particular,
$\phi \circ f_y$ and $\psi \circ f_y$ are in the same subalgebra generated by $f_y$.
One may apply the argument from Theorem 1 in \cite{Grubb:Products} to prove the statement.
\end{enumerate}
\end{proof} 

\begin{theorem} \label{nurhoqiTa}
Suppose $ \eta$ and $ \rho$ are quasi-integrals.
$\eta \times \rho$ is a quasi-linear functional iff $ \eta$ is linear or $ \rho$ is almost simple.
\end{theorem}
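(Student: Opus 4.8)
The theorem asserts an "if and only if" between $\eta\times\rho$ being a quasi-integral and the disjunction ($\eta$ linear) or ($\rho$ almost simple). The plan is to use the reductions and the two-sided structure already in place: Theorem \ref{nurhoqiT} handles the case where $\mu$ (corresponding to $\rho$) assumes more than two values, and Lemma \ref{mainAlmsim} lets me strip off positive scalar multiples so that "almost simple" reduces to "simple."

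\emph{Forward direction (necessity).} Assume $\eta\times\rho$ is a quasi-integral and $\eta$ is \emph{not} linear; I must show $\rho$ is almost simple. I argue by contrapositive on the number of values of $\mu$. By Theorem \ref{nurhoqiT}(1), if $\mu$ assumes more than two values and $\eta\times\rho$ is a quasi-integral, then $\eta$ is linear — contradicting the assumption. Hence $\mu$ assumes at most two values. A nontrivial topological measure assumes at least two values (and the trivial case $\mu\equiv 0$ makes $\rho=0$ almost simple vacuously, or can be dispatched directly), so $\mu$ assumes exactly two values. By the Remark following Theorem \ref{rhoas}, a compact-finite topological measure assuming exactly two values is almost simple. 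Therefore $\rho$ is almost simple, as required.

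\emph{Reverse direction (sufficiency).} Assume $\eta$ is linear or $\rho$ is almost simple; I must show $\eta\times\rho$ is a quasi-integral. If $\eta$ is linear, this is exactly Theorem \ref{nurhoqiT}(\ref{simpleLin}). If $\rho$ is almost simple, write $\rho=c\rho'$ with $c>0$ and $\rho'$ simple (Definition \ref{mulsimqi}). By Lemma \ref{mainAlmsim}, $\eta\times\rho$ is a quasi-integral iff $\eta\times\rho'$ is. Since $\rho'$ is simple, Theorem \ref{nurhoqiT}(\ref{simpleLin}) gives that $\eta\times\rho'$ is a quasi-linear functional, and hence so is $\eta\times\rho$.

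\emph{Main obstacle.} The genuine content all sits in the two cited results, so the only delicate point is bookkeeping at the boundary: ensuring "at most two values" really collapses to "exactly two values equals almost simple" via the Remark after Theorem \ref{rhoas}, and separately confirming the degenerate case $\mu\equiv 0$ (equivalently $\rho\equiv 0$) is covered — here $\eta\times\rho\equiv 0$ is trivially a quasi-integral, consistent with the claim since a zero functional is vacuously almost simple. I expect the hardest part to be phrasing the forward direction cleanly so that the dichotomy "more than two values / exactly two values" lines up precisely with the hypotheses of Theorem \ref{nurhoqiT}(1) and the almost-simple characterization, rather than any new estimate.
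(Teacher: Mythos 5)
Your proof is correct and takes essentially the same route as the paper's: the forward direction is the contrapositive of the paper's one-line argument via Theorem \ref{nurhoqiT}(1) together with the observation that a two-valued compact-finite topological measure is almost simple, and the reverse direction uses Theorem \ref{nurhoqiT}(2) and Lemma \ref{mainAlmsim} exactly as the paper does. Your extra attention to the degenerate case $\mu\equiv 0$ is harmless bookkeeping that the paper leaves implicit.
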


\begin{proof}
($  \Longrightarrow$). Suppose $ \eta \times \rho$ is a quasi-integral. If $ \rho$ is not almost simple then $ \rho$ 
assumes more then two values, and by Theorem \ref{nurhoqiT} $ \eta$ must be linear. \\
($ \Longleftarrow$).
If $ \eta$ is linear then $ \eta \times \rho$ is a quasi-linear functional by Theorem \ref{nurhoqiT}.
Suppose $\rho = c \rho'$ , where $ \rho'$ is simple, $c >0$. Then by Theorem \ref{nurhoqiT} $\eta \times \rho'$ is a quasi-integral, and then by 
Lemma \ref{mainAlmsim} so is $\eta \times \rho$. 
\end{proof}

\begin{corollary} \label{nurhoqi}
Suppose $ \mu(X) = 1$.
The functional $\eta \times \rho$ is a quasi-integral iff $\eta$ is linear or $\rho$ is simple. 
\end{corollary}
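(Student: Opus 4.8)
The plan is to obtain this corollary directly from Theorem~\ref{nurhoqiTa}, which already gives the characterization ``$\eta \times \rho$ is a quasi-integral iff $\eta$ is linear or $\rho$ is almost simple.'' The only gap between that theorem and the corollary is the difference between \emph{almost simple} and \emph{simple}, so the whole task reduces to showing that the normalization $\mu(X) = 1$ forces these two notions to coincide for $\rho$.

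I would first dispatch the easy direction. If $\rho$ is simple then it is in particular almost simple (take the scalar equal to $1$ in Definition~\ref{mulsimqi}), so by Theorem~\ref{nurhoqiTa} the hypothesis ``$\eta$ linear or $\rho$ simple'' already guarantees that $\eta \times \rho$ is a quasi-integral.

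For the converse, suppose $\eta \times \rho$ is a quasi-integral. By Theorem~\ref{nurhoqiTa} either $\eta$ is linear, in which case we are done, or $\rho$ is almost simple. In the latter case, writing the corresponding topological measure as $\mu = c\,\mu'$ with $c > 0$ and $\mu'$ simple, I would simply evaluate at $X$ to get $1 = \mu(X) = c\,\mu'(X)$. Since $\mu'$ is simple it takes only the values $0$ and $1$, so $\mu'(X) \in \{0,1\}$; as $c\,\mu'(X) = 1 \neq 0$ and $c > 0$, we cannot have $\mu'(X) = 0$, whence $\mu'(X) = 1$ and therefore $c = 1$. Thus $\mu = \mu'$ is simple, i.e. $\rho$ is simple, and combining both directions completes the equivalence.

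The argument is essentially immediate once Theorem~\ref{nurhoqiTa} is available, so I do not expect a substantial obstacle. The single point requiring care is the observation that $\mu'(X)$ is itself a value attained by the simple measure $\mu'$ and hence lies in $\{0,1\}$; this, together with the strict positivity of the scaling constant $c$, is exactly what pins down $c = 1$ and prevents the degenerate possibility $\mu'(X) = 0$.
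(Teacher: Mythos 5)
Your proposal is correct and matches the paper's intent: the corollary is stated as an immediate consequence of Theorem~\ref{nurhoqiTa}, with the normalization $\mu(X)=1$ forcing the scaling constant in the almost simple decomposition $\mu = c\,\mu'$ to equal $1$, so that ``almost simple'' collapses to ``simple.'' The paper leaves this observation implicit, and your write-up supplies exactly the missing (routine) detail.
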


\begin{remark}
We may phrase the results of Theorem \ref{nurhoqiTa} it terms of topological measures.
If $\mu$ is a compact-finite topological measure on $X$ and 
$\nu$ is a compact-finite topological measure on $Y$ we 
may define a compact-finite  product topological measure $\nu \times \mu$ on $X \times Y$ if either 
$\nu$ is a measure or $\mu$ is an almost simple topological measure. Similar results hold for topological 
measure $\mu \times \nu$ corresponding to quasi-integral $\rho \times \eta$. 
\end{remark}

\begin{theorem} \label{2then3}
Suppose $\rho, \ \eta$, and $\eta \times \rho$ are quasi-integrals. 
If  any two of them are simple, then so is the third one. 
\end{theorem}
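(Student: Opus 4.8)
The statement unpacks into three implications. The two that \emph{recover a factor} from a pair containing $\eta\times\rho$ are proved by one and the same argument (interchanging the roles of $\rho$ and $\eta$ and the two tensor slots), so there are really two things to do: the \emph{product} implication ``$\rho$ and $\eta$ simple $\Rightarrow\eta\times\rho$ simple'', and one \emph{factor} implication, say ``$\rho$ and $\eta\times\rho$ simple $\Rightarrow\eta$ simple''. The plan is to prove the product implication first and then reduce each factor implication to it.

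For the product implication I would verify condition \ref{phiout} of Theorem \ref{rhosimple} for $\eta\times\rho$ directly. The one computation that does the work is the fiberwise commutation identity
\[
T_\rho(\Phi\circ F)=\Phi\circ T_\rho(F)\qquad (F\in C_c(X\times Y)),
\]
valid because $\rho$ is \emph{simple}: indeed $(\Phi\circ F)_y=\Phi\circ F_y$, and Theorem \ref{rhosimple}\ref{phiout} applied to $\rho$ gives $\rho(\Phi\circ F_y)=\Phi(\rho(F_y))$, so $T_\rho(\Phi\circ F)(y)=\Phi(T_\rho(F)(y))$. Writing $u=T_\rho(F)\in C_c(Y)$ and using that $\eta$ also satisfies \ref{phiout}, I then obtain
\[
(\eta\times\rho)(\Phi\circ F)=\eta(\Phi\circ u)=\Phi(\eta(u))=\Phi\big((\eta\times\rho)(F)\big),
\]
which is precisely \ref{phiout} for $\eta\times\rho$, so by Theorem \ref{rhosimple} the functional $\eta\times\rho$ is simple. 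The bookkeeping I must not skip is that every $\Phi$ here satisfies $\Phi(0)=0$ and that $F(X\times Y)$, $F_y(X)$ and $u(Y)$ all sit in one common interval $[a,b]$ (the last because, $\rho$ being simple, $u(y)=\rho(F_y)\in F_y(X)$); and that $(\nu\times\mu)(X\times Y)=\mu(X)\nu(Y)=1$ by Remark \ref{munuXY}, so $\eta\times\rho$ is nontrivial. This commutation identity is the main obstacle: it is the single place where simplicity of \emph{each} factor is genuinely used, and it is exactly what fails once a factor takes more than two values.

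For the factor implication I would first show that $\eta$ is \emph{almost} simple and then pin down the scalar. For almost simplicity I use criterion \ref{SvistCon+} of Theorem \ref{rhoas}: given $g,h\in C_c^+(Y)$ with $gh=0$, pick $v\in C_c^+(X)$ with $\rho(v)\neq0$ (possible since the simple $\rho$ is nontrivial, e.g. $v\ge 1_K$ for a compact $K$ with $\mu(K)=1$). Then $(v\otimes g)(v\otimes h)=v^2\otimes(gh)=0$ with both factors nonnegative, so the almost simplicity of $\eta\times\rho$ together with $(\eta\times\rho)(v\otimes g)=\rho(v)\eta(g)$ from Proposition \ref{gtimh}\ref{nrgh} gives $\rho(v)^2\,\eta(g)\eta(h)=0$, hence $\eta(g)\eta(h)=0$. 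Thus $\eta$ is almost simple; write $\eta=k\eta'$ with $\eta'$ simple and $k>0$. By the product implication already proved, $\eta'\times\rho$ is simple, and Lemma \ref{mainAlmsim} gives $\eta\times\rho=k(\eta'\times\rho)$. Comparing the ranges of the corresponding topological measures, $\{0,1\}=k\{0,1\}=\{0,k\}$, forces $k=1$, so $\eta=\eta'$ is simple. (Nontriviality of $\eta$ is automatic here, since $\eta=0$ would make $\eta\times\rho=0$, contradicting its simplicity.) The remaining factor implication follows verbatim with the tensor placed on the other slot.

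Besides the commutation identity, the step I expect to require the most care is this final scalar comparison: it relies on both $\eta\times\rho$ and $\eta'\times\rho$ being \emph{nontrivial} simple, so that each range is exactly $\{0,1\}$ rather than $\{0\}$, which is why I track total masses through Remark \ref{munuXY} at every stage. The other routine-but-necessary checks are the $\Phi(0)=0$ normalizations forced by the locally compact (non-compact) setting and the verification that the elementary tensors $v\otimes g$ lie in $C_c^+(X\times Y)$.
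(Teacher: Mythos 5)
Your proof is correct. The product implication (``$\rho$ and $\eta$ simple $\Rightarrow \eta\times\rho$ simple'') is essentially identical to the paper's: both hinge on the fiberwise identity $T_\rho(\phi\circ f)=\phi\circ T_\rho(f)$ and criterion \ref{phiout} of Theorem \ref{rhosimple}. Where you genuinely diverge is in the factor implication. The paper argues by contradiction in two cases: if $\eta$ takes exactly two values but is not simple it invokes Lemma \ref{mainAlmsim}, and if $\eta$ takes more than two values it extracts $h_1,h_2$ from Lemma \ref{rhof1f21} and contradicts the \emph{multiplicativity} of the simple $\eta\times\rho$ on the singly generated subalgebra containing $g\otimes h_1$ and $g\otimes h_2$. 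You instead verify the orthogonality criterion \ref{SvistCon+} of Theorem \ref{rhoas} for $\eta$ directly, by testing $\eta\times\rho$ (which, being simple, satisfies \ref{SvistCon}) on the disjointly supported tensors $v\otimes g$, $v\otimes h$ and dividing by $\rho(v)^2$; this yields almost simplicity of $\eta$ in one stroke with no case split, after which the scalar $k$ in $\eta=k\eta'$ is pinned to $1$ by comparing the (nontrivial) simple ranges of $\eta\times\rho=k(\eta'\times\rho)$. Your route leans on Theorem \ref{rhoas} as a black box (whose proof itself runs through Lemma \ref{rhof1f21}, so the underlying mechanism is the same), and it has the merit of not needing multiplicativity of $\eta\times\rho$ on any particular subalgebra; the paper's route is more self-contained at this spot but requires the extra two-valued-but-not-simple case. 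Your attention to the normalization $k=1$ and to nontriviality of the intermediate product is exactly the bookkeeping the reduction needs, and the symmetric argument for the remaining factor implication goes through as you say.
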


\begin{proof}
Assume that $\rho$ and $\eta$ are simple. 
Let $f \in C_c(X \times Y)$ and $\phi \in C([a,b])$  (with $\phi(0)=0$ if $X$ is locally compact but not compact), where $[a,b]$ contains 
the ranges of $f$ and $T_\rho(f)$.
Since $\rho $ is simple, by part \ref{phiout} of Theorem \ref{rhosimple} we have 
$T_\rho(\phi \circ f) (y)  = \phi \circ T_\rho(f) (y)$ for all $y$, so $T_\rho(\phi \circ f)  = \phi \circ T_\rho(f)$. 
Since $\eta$ is simple, we have
\[ (\eta \times \rho)(\phi \circ f) = \eta(T_\rho(\phi \circ f) = \eta( \phi \circ T_\rho(f)) = \phi (\eta  \circ T_\rho(f)) = 
\phi ( (\eta \times \rho ) f). \]
By part \ref{phiout} of Theorem \ref{rhosimple} $\eta \times \rho$ is simple. (Cf. Corollary 1 in  \cite{Grubb:Products}.) 

Now assume that $\rho$ and $\eta \times \rho$ are simple. 
Take $ g \in C_c^+ (X)$ such  that $\rho(g) =1$. Since $\rho$ is multiplicative  on singly generated sublagebras, $\rho(g^2) = 1$. 
Suppose that $\eta$ is not simple. If $ \eta$ assumes two values then $\eta$  is almost simple, but not simple;
by Lemma \ref{mainAlmsim} $ \eta \times \rho$ is almost simple, but not simple, which contradicts our assumption.
So $ \eta$ assumes more than 2 values.
By Lemma  \ref{rhof1f21} let $h_1, \ h_2 \in C_c^+(Y)$ be such that $ \eta(h_1) = \eta(h_2) =1$, $h_1 h_2=0$, so $ \eta(h_1h_2)=0$.
Since  $(g \otimes h_1)\cdot ( g \otimes h_2) =0$, by Remark \ref{rhoAd} functions $g \otimes h_1, \, g \otimes h_2$ 
belong to the same singly generated subalgebra.  Since $ \eta \times \rho $ is simple, it is multiplicative on this subalgebra and then
using Proposition \ref{gtimh} we have:
\begin{align*}
(\eta \times \rho) (g^2 \otimes h_1 h_2) &=(\eta \times \rho)((g \otimes h_1)\cdot ( g \otimes h_2)) \\ 
&=((\eta \times \rho)(g \otimes h_1)) \cdot ((\eta \times \rho) (g \otimes h_2)) \\ 
&= \rho(g) \eta(h_1) \rho(g) \eta(h_2) = 1.
\end{align*}
On the other hand,  by Proposition \ref{gtimh} 
\[ (\eta \times \rho) (g^2 \otimes h_1 h_2) =\rho(g^2) \eta(h_1 h_2) =\eta(h_1 h_2) =0, \]
and we arrive at contradiction. Thus,  $\eta$ must be simple.

The proof  that $\rho$ is simple when $\eta$ and $\eta \times \rho$ are is similar. 
\end{proof}

From Theorem \ref{2then3} and Lemma \ref{mainAlmsim} we obtain

\begin{corollary}
If $\rho$ and $ \eta$ are almost simple, then so is $ \eta \times \rho$.
\end{corollary}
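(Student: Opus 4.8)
The plan is to reduce everything to the purely simple case and then rescale, using only the two results the corollary is built from: the scaling Lemma \ref{mainAlmsim} and the ``two-implies-the-third'' Theorem \ref{2then3}. Since $\rho$ and $\eta$ are almost simple, I would begin by writing $\rho = c\,\rho'$ and $\eta = k\,\eta'$, where $\rho'$ and $\eta'$ are \emph{simple} quasi-integrals and $c,k>0$, as permitted by Definition \ref{mulsimqi}.

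Before invoking Theorem \ref{2then3} I would record that all the product functionals in sight are genuine quasi-integrals, since both that theorem and Lemma \ref{mainAlmsim} are stated for quasi-integrals. This is where Theorem \ref{nurhoqiTa} does the work: because $\rho'$ is simple, hence almost simple, $\eta' \times \rho'$ is a quasi-linear functional; for the same reason (with $\rho$ in place of $\rho'$) both $\eta' \times \rho$ and $\eta \times \rho$ are quasi-integrals. With this in hand, Theorem \ref{2then3} applies to the triple $\rho',\,\eta',\,\eta'\times\rho'$, and since two of them (namely $\rho'$ and $\eta'$) are simple, the theorem forces the third, $\eta' \times \rho'$, to be simple as well. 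This is the core step.

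Finally I would transport simplicity back up to $\eta\times\rho$ by applying Lemma \ref{mainAlmsim} twice. Holding the first factor $\eta'$ fixed and scaling the second factor gives $\eta' \times \rho = c\,(\eta' \times \rho')$; then holding $\rho$ fixed and scaling the first factor gives $\eta \times \rho = k\,(\eta' \times \rho) = kc\,(\eta' \times \rho')$. Since scaling a quasi-integral by a positive constant scales its corresponding topological measure by the same constant (immediate from the defining formulas in part \ref{mrSets3} of Remark \ref{RemART}), and $\eta' \times \rho'$ is simple with $kc>0$, the topological measure corresponding to $\eta \times \rho$ is $kc$ times a simple topological measure. By Definition \ref{mulsimqi} this is exactly the assertion that $\eta \times \rho$ is almost simple.

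The argument is essentially bookkeeping, so there is no deep obstacle; the only point that genuinely needs attention is making sure that each functional fed into Theorem \ref{2then3} and Lemma \ref{mainAlmsim} really is a quasi-integral, so that those results are applicable and so that the phrase ``almost simple'' is meaningful at every stage. That verification is supplied wholesale by Theorem \ref{nurhoqiTa}, so beyond citing it no extra estimates or constructions are required.
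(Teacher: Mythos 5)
Your argument is correct and is precisely the deduction the paper intends: the corollary is stated there with no proof beyond the phrase ``From Theorem \ref{2then3} and Lemma \ref{mainAlmsim} we obtain,'' and you have filled in exactly that chain (decompose $\rho=c\rho'$, $\eta=k\eta'$ into simple parts, apply Theorem \ref{2then3} to get $\eta'\times\rho'$ simple, and rescale via Lemma \ref{mainAlmsim}). Your extra care in invoking Theorem \ref{nurhoqiTa} to confirm that each product is a genuine quasi-integral before applying those results is a legitimate and welcome addition, not a deviation.
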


\begin{theorem}  \label{nrsimiff} 
Suppose $\rho, \ \eta, \ \eta \times \rho$ are quasi-integrals. 
Then $\eta \times \rho$ is simple iff  $\eta \times \rho = \eta' \times \rho'$  for some simple quasi-integrals $\rho'$ and $\eta'$.
\end{theorem}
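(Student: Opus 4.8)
The plan is to dispatch the easy implication with Theorem \ref{2then3} and then reduce the hard one to a single algebraic identity forcing both factors to be almost simple.

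Assume first that $\eta\times\rho=\eta'\times\rho'$ for simple quasi-integrals $\rho',\eta'$. Since $\rho'$ is simple, $\eta'\times\rho'$ is a quasi-integral by Theorem \ref{nurhoqiT}, and it coincides with the given quasi-integral $\eta\times\rho$. Now two members of the triple $\{\rho',\eta',\eta'\times\rho'\}$ are simple, so Theorem \ref{2then3} yields that $\eta'\times\rho'=\eta\times\rho$ is simple. This settles $(\Leftarrow)$.

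For $(\Rightarrow)$, suppose $\eta\times\rho$ is simple. First I would record nontriviality: a simple topological measure attains the value $1$, so $\eta\times\rho\neq0$, and since $(\eta\times\rho)(g\otimes h)=\rho(g)\eta(h)$ by Proposition \ref{gtimh}, neither $\rho$ nor $\eta$ is the zero functional. The key step is to exploit multiplicativity: by Theorem \ref{rhosimple} a simple quasi-integral is multiplicative on each singly generated subalgebra. Applying this inside $B(g\otimes h)$, which contains $g^2\otimes h^2=(g\otimes h)^2$, and using Proposition \ref{gtimh}, I obtain the master identity
\[ \rho(g^2)\,\eta(h^2)=\rho(g)^2\,\eta(h)^2 \qquad\text{for all } g\in C_c(X),\ h\in C_c(Y). \]
Nontriviality produces $g_0,h_0$ with $\rho(g_0^2),\eta(h_0^2)>0$ (otherwise the right side vanishes identically), and the identity then forces $\rho(g_0),\eta(h_0)\neq0$. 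Setting $\lambda=\rho(g_0^2)/\rho(g_0)^2>0$ and substituting $h=h_0$ (resp.\ $g=g_0$) gives $\rho(g^2)=\lambda\rho(g)^2$ and $\eta(h^2)=\lambda^{-1}\eta(h)^2$ for all $g,h$.

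Now I would set $\rho'=\lambda\rho$ and $\eta'=\lambda^{-1}\eta$, so that $\rho'(u^2)=\rho'(u)^2$ and $\eta'(v^2)=\eta'(v)^2$. The crux is the lemma that a quasi-integral $\sigma$ with $\sigma(u^2)=\sigma(u)^2$ for all $u$ is simple: fixing $f$ and taking $u,v\in B(f)$, linearity of $\sigma$ on $B(f)$ (axiom \ref{QIlinLC}) together with $u^2,v^2,uv,(u+v)^2\in B(f)$ lets me expand $\sigma((u+v)^2)=\sigma(u+v)^2$ and collapse it to $\sigma(uv)=\sigma(u)\sigma(v)$; multiplicativity on every $B(f)$ then gives simplicity via Theorem \ref{rhosimple}. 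Hence $\rho'$ and $\eta'$ are simple quasi-integrals. Finally, since $\rho=\lambda^{-1}\rho'$ and $\eta=\lambda\eta'$, two applications of Lemma \ref{mainAlmsim} give $\eta\times\rho=\lambda^{-1}(\eta\times\rho')=\lambda^{-1}\lambda\,(\eta'\times\rho')=\eta'\times\rho'$, the desired decomposition.

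I expect the main obstacle to be the square-multiplicativity lemma combined with the careful extraction of a single positive constant $\lambda$: the separation of variables and the sign of $\lambda$ rest on the nontriviality bookkeeping, and the polarization-type computation must be confined to a fixed singly generated subalgebra, since $\rho$ and $\eta$ are not globally linear.
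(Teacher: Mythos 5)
Your proof is correct, but the forward direction takes a genuinely different route from the paper's. The paper first shows that $\eta$ can attain at most two values: if it attained more, Lemma \ref{rhof1f21} would supply $h_1,h_2$ with $h_1h_2=0$ and $\eta(h_1)=\eta(h_2)=1$, and multiplicativity of the simple $\eta\times\rho$ on the subalgebra containing $g\otimes h_1$ and $g\otimes h_2$ would force $(\eta\times\rho)(g^2\otimes h_1h_2)$ to equal both $(\rho(g))^2>0$ and $\rho(g^2)\,\eta(h_1h_2)=0$. It then writes $\eta=k\eta'$ with $\eta'$ simple, shifts the constant onto $\rho$ via $\eta\times\rho=\eta'\times(k\rho)$, and invokes Theorem \ref{2then3} to conclude that $k\rho$ is simple. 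You instead apply multiplicativity only on the subalgebras $B(g\otimes h)$ to obtain the separated identity $\rho(g^2)\eta(h^2)=\rho(g)^2\eta(h)^2$, extract the scalar $\lambda$ explicitly, and prove a polarization lemma showing that square-multiplicativity of a quasi-integral implies multiplicativity on each $B(f)$ (this works because \ref{QIlinLC} makes $\sigma$ linear on $B(f)$, which is all the expansion of $\sigma((u+v)^2)$ needs). Your route bypasses Lemma \ref{rhof1f21} and Theorem \ref{2then3} in the hard direction, is more constructive --- it exhibits $\rho'=\lambda\rho$ and $\eta'=\lambda^{-1}\eta$ and makes transparent that the decomposition is just a rescaling of the given factors --- and as a byproduct yields a further equivalent condition for Theorem \ref{rhosimple}: $\sigma$ is simple iff $\sigma(u^2)=\sigma(u)^2$ for all $u$. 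The paper's route is shorter given its existing machinery. One small repair in your nontriviality step: Proposition \ref{gtimh} only controls $\eta\times\rho$ on elementary tensors, so to see that $\rho\neq 0$ and $\eta\neq 0$ you should argue that $\rho=0$ forces $T_\rho(f)=0$ for every $f$, hence $\eta\times\rho=0$, contradicting simplicity; the paper tacitly relies on the same nontriviality when it picks $g$ with $\rho(g)>0$.
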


\begin{proof}
If $\rho$ and $\eta$ are simple, then $\eta \times \rho$ is simple by Theorem \ref{2then3}. 

Now assume that $ \eta \times \rho$ is  simple.
First, we shall show that $\eta$ assumes exactly two values.
If  $ \eta$ attains more than two values,
by Lemma  \ref{rhof1f21} let $h_1, \ h_2 \in C_c^+(Y)$ be such that $ \eta(h_1) = \eta(h_2) =1$, $h_1 h_2=0$,
so $\eta(h_1 h_2) =0$.
Choose $ g \in C_c(X)$ such that $ \rho(g) >0$. 
Since  $(g \otimes h_1)\cdot ( g \otimes h_2) =0$, by Remark \ref{rhoAd} functions $g \otimes h_1, \, g \otimes h_2$ 
belong to the same singly generated subalgebra.  Since $ \eta \times \rho $ is simple, as in the proof of Theorem \ref{2then3} we have:
$$ (\eta \times \rho) (g^2 \otimes h_1 h_2) =\rho(g) \eta(h_1) \rho(g) \eta(h_2) = (\rho(g))^2 > 0. $$
On the other hand, 
\begin{align*}
(\eta \times \rho) (g^2 \otimes h_1 h_2) =\rho(g^2) \eta(h_1 h_2)  =0.
\end{align*}
This contradiction shows that $\eta$  assumes only two values. 

Write $ \eta = k \eta '$ for some $k$ and some simple quasi-integral $ \eta'$.
Then $ \eta \times \rho = (k \eta' ) \times \rho = \eta' \times (k\rho)$. Since $ \eta \times \rho$ and $ \eta'$ are simple, 
by Theorem \ref{2then3} so is $ \rho' = k \rho$, and the assertion follows.
\end{proof}

\section{Products of topological measures and Fubini's theorem} \label{Fubini}

The next two theorems describe how $\nu \times \mu$ acts on sets. 

\begin{theorem} \label{nmmumeas}
Let $\mu$ be a compact-finite topological measure on $X$ and $\nu$ a finite measure on $Y$.
Then for $U$ open in $X \times Y$ we have:
\[ (\nu \times \mu) (U) = \int_Y \mu(U_y) d\nu (y).\]
If  $ \mu$ is finite, for a compact set $K$ in $X \times Y$ we also have 
\[ (\nu \times \mu) (K) = \int_Y \mu(K_y) d\nu (y).\]
\end{theorem}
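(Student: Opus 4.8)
The plan is to establish the open-set formula first and then reduce the compact-set formula to it by complementation. Throughout, since $\nu$ is a finite measure the functional $\eta$ is linear, so by Theorem \ref{nurhoqiT} the functional $\eta \times \rho$ is a genuine quasi-integral and, by Definition \ref{erre}, acts by honest Lebesgue integration in the second variable: $(\eta \times \rho)(f) = \int_Y \rho(f_y)\,d\nu(y)$. Combining this with the formula for the topological measure of an open set in part \ref{mrSets3} of Remark \ref{RemART} gives
\[
(\nu \times \mu)(U) = \sup_{f}\int_Y \rho(f_y)\,d\nu(y),
\]
the supremum taken over all $f \in C_c(X\times Y)$ with $0 \le f \le 1$ and $\operatorname{supp} f \subseteq U$. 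So the whole open-set statement becomes the assertion that this supremum may be pushed inside the integral and identified fiberwise with $\mu(U_y)$.

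For the easy ($\le$) direction I would observe that for such an $f$ the fiber $f_y$ lies in $C_c(X)$ with $0 \le f_y \le 1$ and, since $\{x : f_y(x) \ne 0\} \subseteq (\operatorname{supp} f)_y \subseteq U_y$ with $(\operatorname{supp} f)_y$ compact (Remark \ref{fibers}), one has $\operatorname{supp} f_y \subseteq U_y$. Hence $\rho(f_y) \le \mu(U_y)$ by part \ref{mrSets3} of Remark \ref{RemART}, and integrating in $y$ and taking the supremum yields $(\nu \times \mu)(U) \le \int_Y \mu(U_y)\,d\nu$. The substance is the reverse inequality, and here the idea is to realize $y \mapsto \mu(U_y)$ as the pointwise supremum of the \emph{upward directed} family $\{\,y \mapsto \rho(f_y)\,\}$ of functions, each of which is continuous by Proposition \ref{TrhoCc}. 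Directedness comes from replacing $f^{(1)}, f^{(2)}$ by $\max(f^{(1)},f^{(2)})$, which is again admissible and whose fibers dominate those of $f^{(1)},f^{(2)}$ pointwise, using that $\rho$ is monotone. That the pointwise supremum is exactly $\mu(U_y)$ I would get from a tube-lemma argument: given $g \in C_c(X)$ admissible for the open set $U_y$, the compact set $\operatorname{supp} g \times \{y\}$ lies in $U$, so $\operatorname{supp} g \times V \subseteq U$ for some neighborhood $V$ of $y$; cutting off by a bump function $\chi$ with $\chi(y)=1$ and $\operatorname{supp}\chi\subseteq V$, the function $g \otimes \chi$ is admissible for $U$ and has $y$-fiber equal to $g$, so its value $\rho(g)$ is realized. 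Since $\sup_g \rho(g) = \mu(U_y)$, the pointwise-supremum claim follows (including the case $\mu(U_y)=\infty$). Finally, because $\nu$ is a finite, hence regular (Theorem \ref{subaddit}), Borel measure, the integral is continuous from below along upward directed families of lower semicontinuous functions, so $\sup_f \int_Y \rho(f_y)\,d\nu = \int_Y \mu(U_y)\,d\nu$; this step also supplies the measurability of $y \mapsto \mu(U_y)$, which is lower semicontinuous as a supremum of continuous functions.

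For the compact-set formula I would assume $\mu$ finite and argue by complementation, reducing to the open case already proved. Put $U = (X \times Y) \setminus K$, an open set with $U_y = X \setminus K_y$. Finiteness of $\mu$ lets me apply additivity \ref{TM1} on the partition $X = K_y \sqcup U_y$ to get $\mu(K_y) = \mu(X) - \mu(U_y)$, while finiteness of both $\mu$ and $\nu$ makes $\nu \times \mu$ finite with total mass $\mu(X)\nu(Y)$ by Remark \ref{munuXY}; applying \ref{TM1} to $X\times Y = K \sqcup U$ gives $(\nu \times \mu)(K) = \mu(X)\nu(Y) - (\nu \times \mu)(U)$. Substituting the open-set formula for $(\nu\times\mu)(U)$ and then $\mu(X)\nu(Y) = \int_Y \mu(X)\,d\nu$ yields
\[
(\nu \times \mu)(K) = \int_Y \bigl(\mu(X) - \mu(U_y)\bigr)\,d\nu = \int_Y \mu(K_y)\,d\nu,
\]
as required; finiteness guarantees there is no $\infty-\infty$ in these subtractions.

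The main obstacle is the reverse inequality in the open-set case, i.e. the interchange of supremum and integral. The two things that make it work — and that must be verified carefully — are the monotonicity of $\rho$ (so that the family $\{\rho(f_\cdot)\}$ is genuinely upward directed under $\max$) and the regularity of the finite measure $\nu$ (so that a directed monotone convergence theorem for lower semicontinuous functions is available, since $Y$ need not be second countable and a mere sequence would not suffice).
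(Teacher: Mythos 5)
Your proposal is correct, and its second half (the compact-set formula via complementation, using $(\nu\times\mu)(X\times Y)=\mu(X)\nu(Y)$ from Remark \ref{munuXY} and additivity \ref{TM1} applied to $X\times Y=K\sqcup((X\times Y)\setminus K)$ and $X=K_y\sqcup(X\setminus K_y)$) is exactly the paper's argument. For the open-set identity the paper does not give an argument at all: it delegates to Claims 2 and 3 of Theorem 2 in \cite{Grubb:Products}, merely noting that part \ref{mrSets3} of Remark \ref{RemART} makes that compact-space argument transportable. You instead supply a complete, self-contained proof: the trivial inequality from $\operatorname{supp} f_y\subseteq(\operatorname{supp} f)_y\subseteq U_y$, and the reverse inequality by exhibiting $\{\,y\mapsto\rho(f_y)\,\}$ as an upward-directed family of continuous functions (directed under $\max$) whose pointwise supremum is $\mu(U_\cdot)$ via the tube lemma and $g\otimes\chi$, followed by the directed monotone convergence theorem for nonnegative lower semicontinuous functions with respect to the finite regular Borel extension of $\nu$ guaranteed by Theorem \ref{subaddit}. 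This is in the same spirit as Grubb's fiberwise approximation but is organized differently (a single directed-supremum interchange rather than separate claims), and it has the virtue of handling $\mu(U_y)=\infty$ and the measurability of $y\mapsto\mu(U_y)$ in one stroke. The only ingredient you invoke that the paper never states explicitly is monotonicity of $\rho$ on $C_c^+(X)$ (needed for directedness under $\max$); this is a standard fact that follows from the representation $\rho(f)=\int_0^\infty\mu(f^{-1}((t,\infty)))\,dt$ for $f\ge 0$ together with monotonicity of $\mu$ on open sets, so it is not a gap, but it deserves a one-line justification or a citation to \cite{Butler:QLFLC}.
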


\begin{proof}
Part \ref{mrSets3} of Remark \ref{RemART} allows us to use the same argument as in Claim 2 and Claim 3 of Theorem 2 in  \cite{Grubb:Products} to prove
that for any open $U \subseteq X \times Y$ we have  $ (\nu \times \mu) (U) =  \int_Y \mu(U_y) d\nu(y).$
 
If $\mu$ is finite, then by Remark \ref{munuXY} so is $\nu \times \mu$ and
$(\nu \times \mu) ( X \times Y) = \mu(X) \nu(Y)$. 
If $ K \subseteq X \times Y$ is compact, by Definition \ref{TMLC} we have:
\begin{align*}
(\nu \times \mu) (K) &= (\nu \times \mu) (X \times Y) - (\nu \times \mu) ( (X \times Y) \setminus K) =  \\
&= \nu(Y) \mu(X) - \int_Y \mu((X \times Y) \setminus K)_y \, d \nu(y) \\
&= \nu(Y) \mu(X) - \int_Y( \mu(X)- \mu(K_y)) \,  d \nu(y) =  \int_Y \mu(K_y)  \, d \nu(y).  
\end{align*}
\end{proof}

\begin{theorem} \label{mnmusimpl}
Let $\mu$ be a simple topological measure on $X$ and $\nu$ be a compact-finite topological measure on $Y$.
Then for $U$ open in $X \times Y$ we have:
\[ (\nu \times \mu) (U) =  \nu(\{y: \mu(U_y) = 1\}).\]
If $ \nu$ is finite, for $K$ compact in $X \times Y$ we also have:
\[ (\nu \times \mu) (K) =  \nu(\{y: \mu(K_y) = 1\}).\]
\end{theorem}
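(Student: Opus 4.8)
The plan is to establish the open-set formula first and then deduce the compact-set formula from it by complementation, mirroring the structure of Theorem \ref{nmmumeas}. Throughout I use that a simple $\mu$ takes only the values $0,1$, that the corresponding $\rho$ is then simple (Theorem \ref{rhosimple}), and that $\eta\times\rho$ is genuinely a quasi-integral by part \ref{simpleLin} of Theorem \ref{nurhoqiT}, so that $\nu\times\mu$ exists as a compact-finite topological measure and is computed from $\eta\times\rho$ via part \ref{mrSets3} of Remark \ref{RemART}. Set $W=\{\,y:\mu(U_y)=1\,\}$; since $U_y$ is open (Remark \ref{fibers}), $\mu(U_y)\in\{0,1\}$, so $W=\{\,y:\mu(U_y)\neq 0\,\}$. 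I first check $W$ is open: if $\mu(U_{y_0})=1$, then \ref{TM2} yields a compact $K\subseteq U_{y_0}$ with $\mu(K)=1$, and covering the compact slice $K\times\{y_0\}$ by boxes inside the open set $U$ and intersecting finitely many $Y$-factors (a tube-lemma argument) produces a neighborhood $B\ni y_0$ with $K\subseteq U_{y'}$ for all $y'\in B$, whence $B\subseteq W$. Thus $\nu(W)$ is defined.

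Next I compute $(\nu\times\mu)(U)=\sup\{\eta(T_\rho(f)):0\le f\le 1,\ supp\,f\subseteq U\}$. For the upper bound, fix such an $f$; since $supp\,f_y\subseteq (supp\,f)_y\subseteq U_y$, whenever $\mu(U_y)=0$ we get $\mu(supp\,f_y)=0$ and hence $T_\rho(f)(y)=\rho(f_y)=0$ by Theorem \ref{contCc}, while always $0\le T_\rho(f)\le 1$. So $h:=T_\rho(f)\in C_c^+(Y)$ vanishes off $W$, and for $\epsilon>0$ the truncation $h_\epsilon=(h-\epsilon)^+$ has $supp\,h_\epsilon\subseteq\{h\ge\epsilon\}\subseteq W$ with $0\le h_\epsilon\le 1$, so $\eta(h_\epsilon)\le\nu(W)$ by the defining supremum of $\nu$ on the open set $W$; letting $\epsilon\to0$ and using Theorem \ref{contCc} gives $\eta(h)\le\nu(W)$, hence $(\nu\times\mu)(U)\le\nu(W)$.

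The hard part is the reverse inequality. By \ref{TM2} it suffices, for each compact $L\subseteq W$, to produce a single $f$ with $0\le f\le 1$, $supp\,f\subseteq U$, and $T_\rho(f)\ge 1_L$, for then $\eta(T_\rho(f))\ge\nu(L)$ (defining infimum of $\nu$ on compacta, part \ref{mrSets3} of Remark \ref{RemART}), and taking suprema yields $(\nu\times\mu)(U)\ge\nu(W)$. For each $y\in L$ pick, as in the openness proof, a compact $K^{(y)}\subseteq U_y$ with $\mu(K^{(y)})=1$, a function $q_y\in C_c^+(X)$ equal to $1$ on $K^{(y)}$, and a neighborhood $B^{(y)}\ni y$ with $\overline{supp\,q_y}\times B^{(y)}\subseteq U$; extract a finite subcover $B_1,\dots,B_m$ of $L$ with associated $q_1,\dots,q_m$, and choose $r_j\in C_c^+(Y)$, $0\le r_j\le 1$, with $supp\,r_j\subseteq B_j$ that equal $1$ on a shrunken open cover of $L$. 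The decisive idea is to set $f(x,y)=\max_{1\le j\le m}\min\{q_j(x),\,r_j(y)\}$ rather than a weighted sum: then $\{f>0\}\subseteq\bigcup_j(\overline{supp\,q_j}\times B_j)\subseteq U$, so $supp\,f\subseteq U$, and for $y\in L$ there is $j_0$ with $r_{j_0}(y)=1$, giving $f_y\ge q_{j_0}\ge 1_{K^{(j_0)}}$ and therefore $\rho(f_y)\ge\mu(K^{(j_0)})=1$ by part \ref{mrSets3} of Remark \ref{RemART}. This is exactly where the non-additivity of $\rho$ is circumvented: a sum $\sum_j r_j(y)q_j$ would force an evaluation of $\rho$ on a combination of functions from different singly generated subalgebras, whereas the max--min arrangement lets one $q_{j_0}$ dominate $f_y$ fiberwise, so that the infimum characterization of $\mu$ on compact sets applies directly.

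Finally, for the compact case assume $\nu$ finite; we may assume $\mu(X)=1$ (otherwise $\mu\equiv0$ and both sides vanish), and then $\mu$ is finite, so $\nu\times\mu$ is finite with total mass $\mu(X)\nu(Y)=\nu(Y)$ by Remark \ref{munuXY}. Applying \ref{TM1} to $K\sqcup((X\times Y)\setminus K)=X\times Y$, then the open formula to $(X\times Y)\setminus K$, together with $((X\times Y)\setminus K)_y=X\setminus K_y$ (Remark \ref{fibers}) and $\mu(X\setminus K_y)=\mu(X)-\mu(K_y)$ via \ref{TM1}, reduces $(\nu\times\mu)(K)$ to $\nu(Y)-\nu(\{\,y:\mu(K_y)=0\,\})$. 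An argument symmetric to the openness of $W$ (using \ref{TM3} to enclose $K_{y_0}$ in an open set of measure $0$, then a compactness argument to keep $K_y$ inside it for nearby $y$) shows $\{\,y:\mu(K_y)=0\,\}$ is open; its complement $\{\,y:\mu(K_y)=1\,\}$ is closed and contained in the compact set $\pi_2(K)$, hence compact. A last application of \ref{TM1} gives $\nu(Y)=\nu(\{\,y:\mu(K_y)=1\,\})+\nu(\{\,y:\mu(K_y)=0\,\})$, and the compact formula follows. The only genuine obstacle is the fiberwise construction of the previous paragraph; once the max--min device is in place, the rest is regularity bookkeeping.
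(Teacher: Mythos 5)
Your proof is correct and follows essentially the same route as the paper's: a tube-lemma argument for the openness of $\{y:\mu(U_y)=1\}$, a fiberwise-domination construction for the lower bound, the support estimate from Theorem \ref{contCc} for the upper bound, and complementation (using finiteness of $\nu\times\mu$) for the compact case. The only differences are cosmetic: the paper replaces your max--min function by a single Urysohn function $f\ge 1_D$ for the compact set $D=\bigcup_i C(y_i)\times\overline{W(y_i)}\subseteq U$, which dominates each $1_{C(y_i)}$ fiberwise just as your $\max_j\min\{q_j,r_j\}$ does, and it replaces your truncation $(h-\epsilon)^+$ by enclosing $supp\, f$ in an open $V$ with $\overline{V}\subseteq U$ and using that $\{y:\mu((\overline{V})_y)=1\}$ is closed.
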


\begin{proof}
For $A$ in $X \times Y$ let $B(A) = \{ y : \mu(A_y) = 1\}$. 
Let $U$ be open in $X \times Y$.
It is not hard to show (for example, by applying the argument from Claim 1 of Theorem 3 in \cite{Grubb:Products})
that $B(U)$ is also open.
Since $\mu $ is simple, for any compact $D \subseteq X \times Y$ we have 
$ (B(D))^c = B(D^c)$,
and $B(D)$ is closed. 

Suppose that $ K \subseteq B(U)$ is compact. 
For each $y \in K , \ \mu(U_y) =1$, so there is a compact set $C(y) \subseteq U_y$ with
$\mu(C(y))=1$. Then $C(y) \times \{y\} \subseteq U$, so there are open sets 
$V(y), \, W(y)$ such that $ C(y) \subseteq V(y) \subseteq X, \ y \in W(y) \subseteq Y$, and $V(y) \times \overline{ W(y)}\subseteq U$.
Finitely many $W(y_1), \ldots, W(y_n)$ cover $K$. 
Set $D= \bigcup_{i=1}^n C(y_i)  \times \overline{ W(y_i)}$. Then $D$ is compact and $ D \subseteq U$. 
Choose $f \in C_c(X \times Y)$  such that  $1_D \le f \le 1, \ supp \  f \subseteq U$. 
Then for $y  \in K$, say, $ y \in W(y_i)$, we have 
$ f_y =1$ on $C(y_i)$, so $1=\mu(C(y_i)) \le \rho(f_y) = T_{\rho} (f) (y)$. 
Thus $T_{\rho} (f) =1$ on $K$. Then  by part \ref{mrSets3}  of Remark \ref{RemART}
\[ \nu(K) \le \eta(T_{\rho} (f)) = (\eta \times \rho)(f)  \le (\nu \times \mu) (U) .\]
Taking the supremum over $K \subseteq B(U)$  shows that  
$\nu(B(U))  \le  (\nu \times \mu) (U). $

Now we shall show that  $(\nu \times \mu) (U) \le \nu(B(U))$. 
Since $(\nu \times \mu) (U) = \sup \{ (\eta \times \rho)(f): \, f \in C_c(X \times Y), supp \  f \subseteq U, 0 \le f \le 1 \}$, it is enough to show that 
$ (\eta \times \rho)(f) \le  \nu(B(U))$ for $f \in C_c(X \times Y)$ such that $ supp \  f \subseteq U, 0 \le f \le 1$. 
Let  $C = supp \  f$. 
Pick an open set $V$ with compact closure such that
$C \subseteq V \subseteq \overline{V} \subseteq U$ (see, for example, \cite{Dugundji}, Chapter XI, 6.2).
Note that $ f_y \in C_c(X), supp \  f_y \subseteq V_y,  0 \le f_y \le 1$. Then 
\begin{align*}
 \{ y: T_{\rho} (f) (y) >0 \} &= \{ y:  \rho(f_y) > 0 \}  \subseteq \{ y: \mu(V_y) >0 \} \\
&= \{ y: \mu(V_y) =1\} = B(V) \subseteq B(\overline{V}) \subseteq B(U).
 \end{align*}
Since $B(\overline{V})$ is closed, we have $ supp \  T_{\rho} (f)  \subseteq  B(\overline{V}) \subseteq B(U)$. 
By Proposition \ref{TrhoCc} $ \parallel  T_{\rho} (f)  \parallel \le 1$. 
By part \ref{mrSets3} of Remark \ref{RemART} $(\eta \times \rho)(f) = \eta(  T_{\rho} (f) ) \le \nu(B(U))$.

Now we have: $ (\nu \times \mu) (U) =  \nu(\{y: \mu(U_y) = 1\})$ for any open set $U \subseteq X \times Y$.
When $ \nu$ is finite,  the formula for compact $K$ can be proved as in Theorem \ref{nmmumeas}. 
\end{proof}

\begin{corollary} \label{mnmusimpla}
Suppose $\mu = c \mu'$ is an almost simple topological measure on $X$, where $\mu'$ is simple and $ c>0$, and $\nu$ is a topological measure on $Y$.
Then for $A$  open  in $X \times Y$ we have:
\[ (\nu \times \mu) (A) =  c \, \nu(\{y: \mu(A_y) = c\}).\]
When $\nu$ is finite, the same also holds for compact sets in $X \times Y$.
\end{corollary}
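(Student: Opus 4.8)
The plan is to reduce the almost simple case to the simple case of Theorem \ref{mnmusimpl} by scaling. Let $\rho$ and $\rho'$ be the quasi-integrals corresponding to $\mu$ and $\mu'$, and let $\eta$ be the quasi-integral corresponding to $\nu$. Since $\mu = c\mu'$ with $c>0$, the order-preserving isomorphism of Remark \ref{RemART} gives $\rho = c\rho'$ (as recorded in the remark following Theorem \ref{rhoas}). Because $\mu'$ is simple, $\rho'$ is a simple quasi-integral, so Theorem \ref{nurhoqiT}\ref{simpleLin} guarantees that $\eta \times \rho'$ is a quasi-integral; by Lemma \ref{mainAlmsim} so is $\eta \times \rho$, and moreover $\eta \times \rho = c(\eta \times \rho')$.

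Next I would transfer this identity to the level of topological measures. Writing $\nu \times \mu$ and $\nu \times \mu'$ for the compact-finite topological measures corresponding to $\eta \times \rho$ and $\eta \times \rho'$ (Remark \ref{munuXY}), the equality $\eta \times \rho = c(\eta \times \rho')$ of quasi-integrals translates, via the correspondence of Remark \ref{RemART} and its homogeneity under positive scaling, into $\nu \times \mu = c(\nu \times \mu')$ as set functions on open and compact sets.

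Finally I would apply Theorem \ref{mnmusimpl} to the simple measure $\mu'$. For open $A \subseteq X \times Y$ this gives $(\nu \times \mu')(A) = \nu(\{y : \mu'(A_y) = 1\})$. Since $\mu = c\mu'$ forces $\mu'(A_y) = 1$ to be equivalent to $\mu(A_y) = c$, the two index sets coincide, whence
\[ (\nu \times \mu)(A) = c\,(\nu \times \mu')(A) = c\,\nu(\{y : \mu(A_y) = c\}). \]
When $\nu$ is finite, the identical substitution in the compact part of Theorem \ref{mnmusimpl} yields the stated formula for compact sets. The only point requiring care is the passage from $\eta \times \rho = c(\eta \times \rho')$ to $\nu \times \mu = c(\nu \times \mu')$: one must check that scaling a quasi-integral by $c>0$ scales its corresponding topological measure by $c$ on every open and compact set, which follows directly from the supremum and infimum formulas of part \ref{mrSets3} of Remark \ref{RemART}. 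Everything else is a substitution into results already established.
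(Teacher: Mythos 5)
Your argument is correct and is essentially the paper's own proof: both reduce to the simple case via Lemma \ref{mainAlmsim} ($\nu\times\mu = c(\nu\times\mu')$), apply Theorem \ref{mnmusimpl} to $\mu'$, and use the equivalence $\mu'(A_y)=1 \iff \mu(A_y)=c$. Your extra remark verifying that scaling the quasi-integral by $c$ scales the corresponding topological measure by $c$ (via the sup/inf formulas of Remark \ref{mrSets3}) is a detail the paper leaves implicit, but it is the same route.
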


\begin{proof} 
Using Lemma \ref{mainAlmsim} and Theorem \ref{mnmusimpl} we have:
$(\nu \times \mu) (A) =c (\nu \times \mu') (A) = c \nu(\{y: \mu'(A_y) = 1\}) =  c \nu(\{y: \mu(A_y) = c\})$.
\end{proof}

\begin{corollary}
Suppose $\mu$ is a compact-finite topological measure on $X$, $\nu$ is a finite topological  measure on $Y$, and
$\nu \times \mu$ is a compact-finite topological measure.
If $A \subseteq X$ and $B \subseteq Y$ are both
open then $(\nu \times \mu) ( A \times B) = \mu(A) \nu(B)$.
When $\mu$ is finite, the same also holds when $A \subseteq X$ and $B \subseteq Y$ are compact sets.
\end{corollary}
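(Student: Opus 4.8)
The plan is to reduce the whole statement to a single fiber computation together with the two structural formulas already proved, selected according to the dichotomy that compact-finiteness of $\nu \times \mu$ forces.

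The key observation I would record first is purely set-theoretic: for the rectangle $A \times B$ and any $y \in Y$ one has $(A \times B)_y = A$ when $y \in B$ and $(A \times B)_y = \emptyset$ when $y \notin B$, so that $\mu((A \times B)_y) = \mu(A)\, 1_B(y)$ for every $y$. This identity is what collapses both of the earlier formulas to the desired product.

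Next, since $\nu \times \mu$ is assumed to be a compact-finite topological measure, Theorem \ref{nurhoqiTa} (together with Remark \ref{munuXY} and the fact from Remark \ref{RemART} that $\nu$ is a measure iff the corresponding functional is linear) tells me that either $\nu$ is a measure or $\mu$ is almost simple, and I would split into these two cases. In the first case $\nu$ is a finite measure, so Theorem \ref{nmmumeas} applies: for open $A \times B$ I would compute $(\nu \times \mu)(A \times B) = \int_Y \mu((A \times B)_y)\, d\nu(y) = \int_Y \mu(A)\, 1_B(y)\, d\nu(y) = \mu(A)\, \nu(B)$, and when $\mu$ is finite and $A, B$ are compact the second formula of Theorem \ref{nmmumeas} yields the same computation. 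In the second case, write $\mu = c\mu'$ with $\mu'$ simple and $c > 0$ and apply Corollary \ref{mnmusimpla}: here $\mu(A) \in \{0, c\}$, and the set $\{y : \mu((A \times B)_y) = c\} = \{y : \mu(A)\, 1_B(y) = c\}$ equals $B$ if $\mu(A) = c$ and $\emptyset$ if $\mu(A) = 0$; in either subcase $c\,\nu(\{y : \mu((A \times B)_y) = c\}) = \mu(A)\, \nu(B)$, which is exactly the claim. Since an almost simple topological measure is bounded by $c$ and hence automatically finite, Corollary \ref{mnmusimpla} also supplies the compact-set formula whenever $\nu$ is finite, so the compact case goes through identically.

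The main obstacle is mild: it is simply making the case analysis exhaustive and matching each case to the precise hypotheses of the cited results. In particular I would verify that the hypothesis ``$\mu$ finite'' in the compact-set statement is genuinely needed only in the measure case, since in the almost simple case finiteness of $\mu$ comes for free.
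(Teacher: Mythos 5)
Your proposal is correct and follows essentially the same route as the paper: the paper's (very terse) proof likewise invokes Theorem \ref{nurhoqiTa} to split into the cases ``$\nu$ is a finite measure'' and ``$\mu$ is almost simple,'' and then cites Theorem \ref{nmmumeas} and Corollary \ref{mnmusimpla} respectively. You have merely written out the fiber computation $\mu((A\times B)_y)=\mu(A)\,1_B(y)$ and the bookkeeping that the paper leaves implicit, including the correct observation that finiteness of $\mu$ is automatic in the almost simple case.
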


\begin{proof}
By Theorem  \ref{nurhoqiTa}  either $ \mu$ is  almost simple or $ \nu$ is a finite measure. 
The corollary now  follows  from Theorem \ref{nmmumeas} and Corollary \ref{mnmusimpla}, and 
generalizes Remark \ref{munuXY}.
\end{proof}

\begin{lemma} \label{simpNotM}
If $\mu$ and $\nu$ are almost simple but not measures then $\eta \times \rho \neq \rho \times \eta$. 
\end{lemma}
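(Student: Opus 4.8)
The plan is to reduce to the case where $\mu$ and $\nu$ are themselves simple, to rewrite the desired inequality $\eta\times\rho\neq\rho\times\eta$ as a statement about how the two product topological measures act on a single cleverly chosen open set, and then to build that set from the failures of subadditivity that the non-measure hypothesis provides.

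First I would reduce to the simple case. Writing $\mu=c\mu'$ and $\nu=k\nu'$ with $\mu',\nu'$ simple and $c,k>0$, the corresponding functionals satisfy $\rho=c\rho'$ and $\eta=k\eta'$; by Lemma \ref{mainAlmsim} (applied in each slot, and its evident analogue for $\rho\times\eta$) one gets $\eta\times\rho=ck(\eta'\times\rho')$ and $\rho\times\eta=ck(\rho'\times\eta')$, so since $ck>0$ it suffices to prove $\eta'\times\rho'\neq\rho'\times\eta'$. As $\mu$ is a measure iff $\mu'$ is, the measures $\mu',\nu'$ are again not measures, so I may assume from the start that $\mu,\nu$ are simple (hence finite, with $\mu(X)=\nu(Y)=1$ by nontriviality) and not measures. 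Since $\rho,\eta$ are simple, $\eta\times\rho$ and $\rho\times\eta$ are quasi-integrals (Theorem \ref{nurhoqiTa}) and are simple (Theorem \ref{2then3}); denote by $\nu\times\mu$ and $\mu\times\nu$ the corresponding simple topological measures (Remark \ref{munuXY}). By Theorem \ref{mnmusimpl} and its analogue with the roles of $X$ and $Y$ interchanged, for every open $W\subseteq X\times Y$ we have $(\nu\times\mu)(W)=\nu(\{y:\mu(W_y)=1\})$ and $(\mu\times\nu)(W)=\mu(\{x:\nu(W_x)=1\})$. Thus it is enough to exhibit one open $W$ on which these two numbers disagree.

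To build $W$ I would invoke Theorem \ref{subaddit}: since $\nu$ is not a measure it is not subadditive, so there are open $B_1,B_2\subseteq Y$ with $\nu(B_1)=\nu(B_2)=0$ and $\nu(B_1\cup B_2)=1$; similarly, from the non-measure structure of $\mu$ I would extract three open sets $A_1,A_2,A_3\subseteq X$ with $\mu(A_i)=0$, with $\mu(A_1\cup A_2)=\mu(A_1\cup A_3)=1$, and with $A_2\cap A_3=\emptyset$. With these, set
\[ W=\bigl(A_1\times(B_1\cup B_2)\bigr)\cup(A_2\times B_1)\cup(A_3\times B_2). \]
A direct fibre computation then produces the asymmetry: for $y\in B_1$ the slice $W_y\supseteq A_1\cup A_2$ and for $y\in B_2$ it contains $A_1\cup A_3$, so $\{y:\mu(W_y)=1\}\supseteq B_1\cup B_2$ and $(\nu\times\mu)(W)=\nu(B_1\cup B_2)=1$; on the other hand $W_x\supseteq B_1\cup B_2$ exactly when $x\in A_1$ (this is where $A_2\cap A_3=\emptyset$ is used, to prevent any $x$ outside $A_1$ from meeting both columns), while for $x\in A_2$ or $x\in A_3$ off $A_1$ the slice $W_x$ lies in a single $B_j$ and is $\nu$-null. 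Hence $\{x:\nu(W_x)=1\}=A_1$ and $(\mu\times\nu)(W)=\mu(A_1)=0$, giving $\nu\times\mu\neq\mu\times\nu$, i.e. $\eta\times\rho\neq\rho\times\eta$.

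The fibre bookkeeping above is routine; the main obstacle is the step extracting the configuration $A_1,A_2,A_3$. A single failure of subadditivity yields only a pair of null sets whose union is full, and a short check shows that any open set assembled from just one such pair in each factor gives the same value in both orders — the two computations are transposes of one another — so one genuinely needs a third null ``completion'' $A_3$ of $A_1$ disjoint from $A_2$. Producing it is where the finer structure of a simple non-measure (as in Grubb's treatment) must be used, and I expect this to be the only substantive point; once the three cells are in hand the remaining verification is exactly the computation carried out above.
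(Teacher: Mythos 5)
Your reduction to the simple case and your use of Theorem \ref{mnmusimpl} to turn the problem into exhibiting one open set on which $(\nu\times\mu)$ and $(\mu\times\nu)$ disagree are exactly right, and they match the paper. But the proof has a genuine gap at precisely the point you flag yourself: you never construct the configuration $A_1,A_2,A_3$ (three null open sets with $A_2\cap A_3=\emptyset$ and $\mu(A_1\cup A_2)=\mu(A_1\cup A_3)=1$), and it is not at all clear that a general simple non-measure admits one; nothing in Theorem \ref{subaddit} produces a \emph{third} null set disjoint from $A_2$ whose union with $A_1$ is still full. So as written the argument is incomplete at its only substantive step.

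Moreover, your diagnosis that ``one genuinely needs a third null completion'' is mistaken, and this is where the paper's proof diverges. You are right that using a pair of null open sets with full union in \emph{each} factor gives a symmetric (hence useless) computation. The fix is not a third set but a \emph{dual} configuration in one factor: since $\nu$ fails subadditivity on compact sets (Theorem \ref{subaddit}), there are compacts $C,K\subseteq Y$ with $\nu(C)=\nu(K)=0$ and $\nu(C\cup K)=1$; their complements are open sets $W,E$ with $\nu(W)=\nu(E)=1$ and $\nu(W\cap E)=0$. Taking $U,V\subseteq X$ open and null with $\mu(U\cup V)=1$ and setting $A=(U\times W)\cup(V\times E)$, the fibre computation gives $\mu(A_y)=1$ only for $y\in W\cap E$ (where $A_y=U\cup V$), so $(\nu\times\mu)(A)=\nu(W\cap E)=0$, while $\nu(A_x)=1$ for every $x\in U\cup V$ (each such slice contains $W$ or $E$), so $(\mu\times\nu)(A)=\mu(U\cup V)=1$. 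The asymmetry comes from pairing a ``null sets with full union'' configuration in $X$ against a ``full sets with null intersection'' configuration in $Y$, which requires no more than a single failure of subadditivity in each factor.
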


\begin{proof}
We first prove (as in Corollary 3 in \cite{Grubb:Products})  that if
 $\mu$ and $\nu$ are simple but not measures then $\eta \times \rho \neq \rho \times \eta$. 
If $\mu$ and $\nu$ are not measures, by Theorem \ref{subaddit}
they are not subadditive, and
we may find open sets $U, V \subseteq X$ with $\mu(U) = \mu(V) =0, \  \mu(U \cup V) =1$, and
compact sets $C, K \subseteq Y$ with $\nu(C) = \nu(K) =0, \ \nu(C \cup K) = 1$. Taking complements
of $C$ and $K$ we get open sets $W, E \subseteq Y$ such that $\nu(W) = \nu(E) = 1, \  
\nu(W \cap E) =0$.
Let $A = (U \times W) \cup (V \times E)$.  We shall show that  $(\nu \times \mu)(A) =0$ using 
Theorem \ref{mnmusimpl}.   If $ y \in E \cap W$ then $A_y = U \cup V$, so $\mu(A_y) =1$. 
For the cases $ y \in W \setminus E, \ y \in E \setminus W, \ y \notin E \cup W$ we have $\mu(A_y) = 0$.
Then $(\nu \times \mu) (A) = \nu(E \cap W) = 0$. A similar argument shows that 
$(\mu \times \nu)(A) =1$. Since $(\nu \times \mu) (A) \neq  (\mu \times \nu)(A)$, we have
$\eta \times \rho \neq \rho \times \eta$. 

If $\mu$ and $\nu$ are almost simple but not measures, write $\mu = c \mu', \nu = k \nu' $, 
where  $c,k>0$ and $\mu', \nu'$ are simple, but not measures. With simple quasi-integrals $ \rho', \eta'$ corresponding to
$\mu', \nu' $ we have $ \eta \times \rho = ck (\eta' \times  \rho') \ne ck (\rho' \times \eta') = \rho \times \eta$.
\end{proof}

\begin{lemma} \label{ptmCase}
Suppose one of compact-finite topological measures $ \mu,\nu$ is a positive scalar multiple of a point mass. 
Then  $(\nu \times \mu) =  (\mu \times \nu)$.
\end{lemma}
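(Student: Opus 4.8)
The plan is to reduce everything to a short direct computation, exploiting the fact that a positive scalar multiple of a point mass is simultaneously a (Radon) measure and an almost simple topological measure. By symmetry I may assume $\mu = c\,\delta_{x_0}$ for some $c>0$ and $x_0 \in X$; the case $\nu = k\,\delta_{y_0}$ is identical with the roles of the two factors interchanged. First I would record that both products are genuinely defined. Since $\mu$ is a measure, $\rho$ is linear (Remark \ref{RemART}), and since $\mu$ assumes only the values $0$ and $c$, $\rho$ is almost simple. Hence by Theorem \ref{nurhoqiTa}, together with its version in which the roles of the two factors are swapped, both $\eta \times \rho$ and $\rho \times \eta$ are quasi-integrals, so $\nu \times \mu$ and $\mu \times \nu$ are indeed compact-finite topological measures. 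By the order-preserving isomorphism of Remark \ref{RemART}, it then suffices to prove $(\eta \times \rho)(f) = (\rho \times \eta)(f)$ for every $f \in C_c(X \times Y)$.

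The computation is then immediate. Because $\mu = c\,\delta_{x_0}$ is a measure, $\rho(g) = \int_X g\, d\mu = c\,g(x_0)$ for every $g \in C_c(X)$. Applying this to the slice $f_y$ gives $T_\rho(f)(y) = \rho(f_y) = c\,f(x_0,y)$, so that $T_\rho(f) = c\,f_{x_0}$ as an element of $C_c(Y)$; by homogeneity of $\eta$ (condition \ref{QIconsLC}) this yields
\[ (\eta \times \rho)(f) = \eta(T_\rho(f)) = c\,\eta(f_{x_0}). \]
On the other side, $S_\eta(f) \in C_c(X)$ by Proposition \ref{TrhoCc}, and evaluating $\rho$ on it gives
\[ (\rho \times \eta)(f) = \rho(S_\eta(f)) = c\,S_\eta(f)(x_0) = c\,\eta(f_{x_0}), \]
where the last equality is just $S_\eta(f)(x_0) = \eta(f_{x_0})$ from Definition \ref{TrhoSeta}. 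The two expressions coincide, which is exactly the desired equality.

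I do not expect a serious obstacle here; the only points requiring care are bookkeeping ones. I must confirm that $f_{x_0}$ really lies in $C_c(Y)$ (it does, as noted immediately before Definition \ref{TrhoSeta}, since $f_x \in C_c(Y)$ for each fixed $x$), and that replacing $\rho$ by $c$ times evaluation at $x_0$ is legitimate, which rests on $\mu = c\,\delta_{x_0}$ being a measure so that $\rho_\mu$ agrees with ordinary integration against $\mu$ (Remark \ref{RemART}). For the symmetric case I would instead use $\eta(h) = k\,h(y_0)$ for $h \in C_c(Y)$, so that $S_\eta(f) = k\,f_{y_0}$ and $T_\rho(f)(y_0) = \rho(f_{y_0})$; both repeated integrals then collapse to $k\,\rho(f_{y_0})$, giving the equality in that case as well.
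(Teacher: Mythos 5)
Your proof is correct, but it takes a genuinely different route from the paper. The paper works at the level of the product \emph{topological measures}: it reduces to the case $\mu=\delta_{x_0}$, notes it suffices to check equality on open sets, and then evaluates $(\nu\times\mu)(U)$ via Theorem \ref{mnmusimpl} (since $\delta_{x_0}$ is simple) and $(\mu\times\nu)(U)$ via Theorem \ref{nmmumeas} (since $\delta_{x_0}$ is a finite measure), obtaining $\nu(U_{x_0})$ both times. You instead work at the level of the \emph{functionals}: since $\rho$ is $c$ times evaluation at $x_0$, both $\eta(T_\rho(f))$ and $\rho(S_\eta(f))$ collapse to $c\,\eta(f_{x_0})$, and equality of the quasi-integrals gives equality of the corresponding topological measures via the isomorphism of Remark \ref{RemART}. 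Your argument is more elementary and self-contained --- it bypasses the set-level formulas of Theorems \ref{nmmumeas} and \ref{mnmusimpl} entirely, needing only the definitions of $T_\rho$ and $S_\eta$ plus Theorem \ref{nurhoqiTa} to know the products are quasi-integrals (a point you rightly flag, and which the paper leaves implicit); the paper's version, by contrast, sits naturally after those two theorems and serves to illustrate them. One small stylistic note: you invoke both ``$\rho$ is linear'' and ``$\rho$ is almost simple'' to get that both products are quasi-integrals, which is exactly the right observation --- a scalar multiple of a point mass is the one kind of topological measure that is simultaneously a measure and almost simple, and that is precisely why this is the only non-measure case in which Fubini holds (Theorem \ref{qiFubini}).
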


\begin{proof}
If we prove the statement in the case when one of  $ \mu,\nu$ is a point mass, the lemma easily follows.  
So suppose $ \mu = \delta_{x_0}$. 
It is enough to show that $(\nu \times \mu) =  (\mu \times \nu)$ for open sets.
Let $U \subseteq X \times Y$ be open. Then $\mu(U_y) = 1$ iff $ (x_0, y) \in U$, and by 
Theorem \ref{mnmusimpl} we have:
\begin{align*}
(\nu \times \mu) (U) = \nu(\{y: \mu(U_y) = 1 \})  = \nu(\{ y: (x_0, y) \in U \}) = \nu(U_{x_0}).
\end{align*}
By Theorem \ref{nmmumeas} we have:
$$ (\mu \times \nu) (U) =  \int_X \nu(U_x) d \mu(x) = \nu(U_{x_0}).$$
Thus, $(\nu \times \mu) =  (\mu \times \nu)$.
\end{proof}

Now we are ready to answer the question of when a version of Fubini's theorem holds for repeated quasi-integrals, 
in other words, when $\eta \times \rho = \rho \times \eta$. 

\begin{theorem} \label{qiFubini}
Let $\rho$ be a  quasi-integral with  corresponding compact-finite topological measure $\mu$ on $Y$ and 
let $\eta$ be a  quasi-integral with  corresponding compact-finite topological measure $\nu$ on $Y$. 
Then  $\eta \times \rho = \rho \times \eta$ if and only if $\mu, \ \nu$ are both measures or 
one of $\mu, \ \nu$ is a positive scalar multiple of a point mass.
\end{theorem}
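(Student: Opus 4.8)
The plan is to treat the two implications separately, assuming throughout that $\rho,\eta$ are nontrivial (if one of them vanishes, both iterated functionals are identically zero). The ``if'' direction is short. If $\mu$ and $\nu$ are both measures, then by Theorem \ref{subaddit} they are Radon, $\rho,\eta$ are the associated integrals (Remark \ref{RemART}), and for $f\in C_c(X\times Y)$ the two iterated integrals of Definition \ref{erre} agree by the classical Fubini--Tonelli theorem (everything is finite since $f$ has compact support). If instead one of $\mu,\nu$ is a positive scalar multiple of a point mass, then that topological measure is simultaneously a measure and almost simple, so by Theorem \ref{nurhoqiTa} both $\eta\times\rho$ and $\rho\times\eta$ are quasi-integrals; Lemma \ref{ptmCase} then gives $\nu\times\mu=\mu\times\nu$ as topological measures, whence $\eta\times\rho=\rho\times\eta$ by Remark \ref{RemART}.

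The heart of the forward direction is an asymmetric test. Assuming $\mu$ takes more than two values, I would use Lemma \ref{rhof1f21} to pick $f_1,f_2\in C_c^+(X)$ with $f_1f_2=0$ and $\rho(f_1)=\rho(f_2)=1$, and for arbitrary $g,h\in C_c(Y)$ set $F=f_1\otimes g+f_2\otimes h$. Evaluating the fibers and using $f_1f_2=0$ together with homogeneity and the additivity-on-zero-products of Remark \ref{rhoAd} yields
\[ (\eta\times\rho)(F)=\eta(g+h),\qquad (\rho\times\eta)(F)=\eta(g)+\eta(h). \]
Consequently, if $\eta\times\rho=\rho\times\eta$ and $\mu$ takes more than two values, then $\eta$ is additive and $\nu$ is a measure; by symmetry, if $\nu$ takes more than two values then $\mu$ is a measure. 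These two implications are the engine of the proof.

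With the test in hand I would finish by splitting on the number of values, recalling that a nontrivial compact-finite topological measure takes either exactly two values (hence is almost simple) or more than two. If both $\mu$ and $\nu$ take more than two values, the two implications force both to be measures, giving alternative~(I). If exactly one, say $\mu$, takes more than two values while $\nu$ takes exactly two, the test forces $\nu$ to be a measure; being two-valued, $\nu$ is almost simple, and an almost-simple measure is a positive scalar multiple of a point mass, giving~(II) (the symmetric situation is identical). Finally, if both take exactly two values they are both almost simple, and Lemma \ref{simpNotM} shows that if in addition neither were a measure the two orders would disagree; hence at least one of them is an almost-simple measure, again a positive scalar multiple of a point mass, so~(II) holds.

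The step I expect to demand the most care is the measure-theoretic fact used repeatedly above: an almost-simple measure is a positive scalar multiple of a point mass. Writing $\mu=c\mu'$ with $\mu'$ simple, $\mu'$ is a $\{0,1\}$-valued Radon measure with $\mu'(X)\le 1$, hence finite; an argument with inner regularity shows that the compact sets of full measure have the finite intersection property and intersect in a single point $x_0$, so $\mu'=\delta_{x_0}$. This is the one place where genuine measure-theoretic structure, rather than the purely quasi-linear machinery, enters, and it is precisely what connects the two-valued cases back to the point-mass alternative.
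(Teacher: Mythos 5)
Your proposal is correct and follows essentially the same route as the paper: the same test function $f_1\otimes g+f_2\otimes h$ built from Lemma \ref{rhof1f21} to show the two orders disagree unless both iterated functionals are quasi-integrals, then the same trichotomy dispatched by Fubini, Lemma \ref{ptmCase}, and Lemma \ref{simpNotM}. The only addition is that you make explicit (and correctly sketch) the fact that an almost simple measure is a positive scalar multiple of a point mass, which the paper uses implicitly when passing from the case list of Theorem \ref{nurhoqiTa} to the stated alternatives.
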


\begin{proof}
(${\Longleftarrow}$) 
If both $\mu, \ \nu$ are measures, then  $\eta \times \rho = \rho \times \eta$ by Fubini's theorem. 
If one of $\mu, \ \nu$ is a  positive scalar multiple of a point mass, then $\eta \times \rho = \rho \times \eta$ by Lemma \ref{ptmCase}. \\
(${\Longrightarrow}$)
First we shall show that $\eta \times \rho$ and $\rho \times \eta$ must be quasi-integrals. 
Suppose $\eta \times \rho$ is not a quasi-integral, that is, $ \rho$ is not almost simple and $\eta$ is not linear by 
Theorem \ref{nurhoqiTa}. 
Let $f_1, f_2 \in C_c(X)$ be functions given by Lemma \ref{rhof1f21}, 
so $f_1 f_2 = 0$, 
$\rho(f_1) = \rho(f_2) =1$. Also choose $g,  h \in C_c(Y)$ such that 
$\eta(g+ h) \neq \eta(g) + \eta(h)$. Let $f = f_1 \otimes g + f_2 \otimes h$. 
As in formula (\ref{Trho}), we have $T_{\rho} (f) = g + h$. Then 
\[ (\eta \times \rho)(f) = \eta(T_{\rho} (f)) = \eta(g + h).\]
Now we shall look at $(\rho \times \eta)(f) = \rho(S_{\eta}(f)).$
For each $x$,  $S_{\eta}(f) (x) = \eta(f_x) = \eta( f_1(x)  g + f_2(x) h).$
Since  $ (f_1(x)  g)  ( f_2(x) h) = 0$, we have:
\[ \eta( f_1(x)  g + f_2(x) h) = \eta (f_1(x)  g) + \eta (f_2(x) h) = f_1(x) \eta(g) + f_2(x) \eta(h). \]
Thus, $S_{\eta}(f)  =  f_1 \eta(g) + f_2 \eta(h)$. Since $f_1 \eta (g) \cdot f_2 \eta(h) =0$, we have
\[ (\rho \times \eta)(f) =\rho(S_{\eta}(f)) = \eta(g) \rho(f_1) + \eta(h) \rho(f_2) = \eta(g) + \eta(h).\]
and we see that $\eta \times \rho \neq \rho \times \eta$. Thus, $\eta \times \rho$ must be a quasi-integral, and so must $\rho \times \eta$. 

Both $\eta \times \rho$ and $\rho \times \eta$ are quasi-integrals. 
From Theorem \ref{nurhoqiTa}
we see that this happens only when (a) both $\mu$ and $\nu$ are measures,  or (b) at least one of $\mu$ or $\nu$ 
is a positive scalar multiple of a point mass, or  (c) both $ \mu $ and $\nu$ are almost simple, but not measures.
The first two cases produce $\eta \times \rho = \rho \times \eta$, by Fubini's Theorem or Lemma \ref{ptmCase}.
In the last case (c),  by Lemma \ref{simpNotM} $\eta \times \rho \neq \rho \times \eta$. This finishes the proof.
\end{proof}

\begin{remark}
As in the compact case (see \cite{Grubb:Products}), we have the following interesting phenomenon: 
if  $ \mu$ and $\nu$ are almost simple, but not measures, then  $\nu \times \mu$ and $\mu \times \nu$ 
are different, even though they agree on rectangles. This holds even when $X =Y$ and $ \mu = \nu$.
Taking $a (\mu \times \nu) + (m-a) (\nu \times \mu)$, where $m = \mu(X)^2$, we obtain uncountably many 
topological measures that agree on rectangles, but are distinct. This is impossible for measures on product spaces, 
as they are determined by values on rectangles.
\end{remark}

{\bf{Acknowledgments}}:
The author would like to thank
the Department of Mathematics at the University of California Santa Barbara 
for its hospitality and supportive environment.



\end{document}